\DeclareMathOperator{\spn}{span}
\newcommand{\ol}{\overline} 
\newcommand{\vertiii}[1]{{\left\vert\kern-0.25ex\left\vert\kern-0.25ex\left\vert #1 
    \right\vert\kern-0.25ex\right\vert\kern-0.25ex\right\vert}}
 \title{Rigorous derivation of the generalized Reynolds equation from the Boltzmann equation}
 \author{Andrei Ichim}
 \date{}
\newtheorem{theorem}{Theorem}
\newtheorem{lemma}{Lemma}
\begin{document}
\maketitle
\begin{abstract}
We study the stationary Boltzmann equation in a thin slab for a rarefied gas for which the molecular mean free path is comparable to the film thickness. We prove that there exists a solution which converges, in the hydrodynamic limit, to a density Maxwellian -- this density is obtained as the solution to the generalized Reynolds equation. The convergence is proved using a truncated Hilbert expansion by carefully estimating the remainder. 
\end{abstract}
\section{Introduction}
\subsection{The problem and its motivation}
The Reynolds equation describes the pressure distribution in a thin layer of lubricant film between two surfaces in relative motion. More precisely, let the surfaces be located at $z=0$ and $z=\varepsilon H(x,y)$ with $\varepsilon\ll 1$. Assume that the top surface is fixed while the bottom one moves with constant velocity $U$ in the $x$ direction. The Reynolds equation can be written as
\begin{equation}\label{reynolds0}
    \partial_x \left(H^3\partial_x p\right)+\partial_y \left(H^3\partial_y p\right)=6U\partial_x H. 
\end{equation}
The equation \eqref{reynolds0} was derived in a heuristic way by Reynolds (1886) from the Navier-Stokes equations. He noticed that, by averaging the mass conservation (from $0$ to $H(x,y)$) and using the momentum balance equation to evaluate the quantities appearing as integrands we can eliminate the dependence on the velocity and also on the spatial variable $z$ to derive \eqref{reynolds0}. Since the Reynolds equation has had numerous practical applications (see for instance \cite{cercignani2005}) -- it was naturally addressed by physicists and engineers the question whether Reynolds's argument can be extended to rarefied gases. Since in the context of gas lubrication the molecular mean free path
is no longer negligible when compared with the to the macroscopic distance between the two surfaces the continuum hypothesis fails and the kinetic theory needs to be employed. In \cite{fk88} the authors start from the linearized BGK equation and obtain heuristically a modified Reynolds equation  which accounts for the kinetic effects. \\
The aim of this paper is to present a rigorous derivation of the generalized Reynolds equation starting from the fully nonlinear (stationary) Boltzmann equation. To achieve this we are going to use (as in \cite{caflisch} or \cite{esposito1994}) a truncated expansion in terms of the Knudsen number $Kn\sim\varepsilon$ whose leading term is a density Maxwellian. This density $\rho$ will be the solution to the generalized Reynolds equation, which will be obtained as an averaged mass conservation of the second order term of the Hilbert expansion. The main result in Section 2 is the proof of the existence and critically the positivity of $\rho$.\\
The remainder of the paper will be devoted to establishing a bound on the remainder. We are going to use the techniques developed by R. Esposito, Y. Guo et al. (\cite{esposito2013}, \cite{esposito2018}) in order to obtain the delicate $L^2$ estimates on the hydrodynamic part of the solution. The only real difference is related to the fact that the size of the domain itself depends on $\varepsilon$, so close attention is required in order to see how various constants depending on the domain change with respect to $\varepsilon$.  

\subsection{Preliminary definitions and results}
\indent We consider $\omega\subset\mathbb{R}^2$ be a $C^\infty$ bounded open set whose boundary is described by $\partial\omega=\{(x,y)\in\mathbb{R}^2\,|\,x\in [0,1],y=y_0(x)\} $ with $y_0$ satisfying
\begin{equation}\label{eqy0}
    \sup_{x\in [0,1]}|y_0''(x)|<\infty. 
\end{equation}
Let $H>0$ be fixed. For all small $\varepsilon>0$ let $D_\varepsilon=\omega\times [0,\varepsilon H]$ and we will call $D=[0,H]$ the rescaled domain. The three part boundary of $D_\varepsilon$ can be written as
\[
\partial D_\varepsilon=\underbrace{\omega_b\cup \omega_t}_{:=\omega'}\cup \gamma_l^\varepsilon,
\]
where $\omega_b=\omega\times\{z=0\}$, $\omega_t=\omega\times\{z=H\}$ and $\gamma_l^\varepsilon=\partial\omega\times (0,\varepsilon H)$ (we will call $\gamma_l=\partial\omega\times (0,H)$) -- see figure below.\\
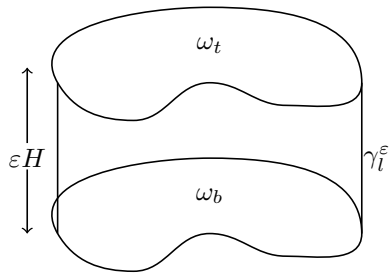
\begin{figure}[H]
	\begin{tikzpicture}
	\draw[semithick] (0,0) to [out=300,in=180] (1,-.5) to [out=0,in=180] (2,0) to [out=0,in=180] (3,-.3) to [out=0,in=270] (4,0) to [out=90,in=0] (2,1) to [out=180,in=120] (0,0);
	\draw[semithick] (0,2) to [out=300,in=180] (1,1.5) to [out=0,in=180] (2,2) to [out=0,in=180] (3,1.7) to [out=0,in=270] (4,2) to [out=90,in=0] (2,3) to [out=180,in=120] (0,2);
	\draw[-,semithick] (0,0) -- (0,2);
	\draw[-,semithick] (4,0) -- (4,2);
	\draw (2,.5) node {$\omega_b$};
	\draw (2,2.5) node {$\omega_t$};
	\draw (4.2,1) node {$\gamma^\varepsilon_l$};
	\draw[->,semithick] (-0.4,1.2) -- (-0.4,2.2);
	\draw[->,semithick] (-0.4,.85) -- (-0.4,0);
	\draw (-.4,1) node {$\varepsilon H$};
	\draw (-5,.5) node {};
        .. (3, -1) .. controls (4, 0) and (2, 1)
   .. (0, 0);    
	\end{tikzpicture}
\caption{The domain $D_\varepsilon$.}
\end{figure}
We consider the stationary Boltzmann equation in the domain $D_\varepsilon$ and assume that the Knudsen number is of order $\varepsilon$
\begin{equation}\label{bltze}
    v\cdot\nabla_\bold{x} f_\varepsilon=\frac{1}{k_0\varepsilon}Q(f_\varepsilon,f_\varepsilon),\quad\text{in } D_\varepsilon,
\end{equation}
where $f_\varepsilon$ is the distribution density, $k_0$ is the rescaled Knudsen number and the Boltzmann collision operator $Q$ corresponding to the hard spheres cross section is defined as follows
\begin{equation*}
    Q(f,f)=\int_{\mathbb{R}^3}\int_{\mathbb{S}^2}\left(f(v_*')f(v')-f(v_*)f(v)\right)|(v-v_*)\cdot\sigma|dv_* d\sigma,
\end{equation*}
with $ v' = v-\left((v - v_*) \cdot \sigma\right)\sigma,\,v_*' = v+\left((v - v_*) \cdot \sigma\right)\sigma$. Boltzmann’s collision operator has the fundamental property of conserving mass, momentum and energy
\begin{equation}\label{masscons}
    \int_{\mathbb{R}^3}Q(f,f) \begin{pmatrix} 1\\v\\|v|^2\end{pmatrix}dv=0.
\end{equation}
To simplify the calculations, throughout this material we will only be using a normalized Maxwellian $M$ defined by
$$M=\frac{1}{(2\pi)^{3/2}}e^{-\frac{|v|^2}{2}}.$$
We introduce the linearized collision operator
\begin{equation}\label{linearized}
    Lf=-\frac{2M^{-1}}{k_0}Q(M,Mf),
\end{equation}
as well as the nonlinear collision operator
\begin{equation*}
    \Gamma(f,g)=\frac{2M^{-1}}{k_0}Q(Mf,Mg).
\end{equation*} 
The operator $L:L^2(Mdv)\to L^2(Mdv)$ is self-adjoint and has a five dimensional null space
\begin{equation}\label{eqLker}
    \ker L=\spn\{1,v,|v|^2\}.
\end{equation}
We denote the orthogonal projection of $f$ onto $\ker L$ as
\begin{equation}\label{projdef}
    Pf=a+v\cdot b+\frac{|v|^2-3}{2}c,
\end{equation}
while the projection on the orthogonal complement of $\ker L$ we will call 
\begin{equation}\label{kindef}
f^\perp=f-Pf.    
\end{equation}

It is well known (for instance from \cite{cercignani1994}) that $$L=\nu I+K,$$ where
the collision frequency $\nu=\nu(v)$ satisfies the following property
\begin{equation}\label{eqnu}
\nu_m(1+|v|)\leq \nu(v)\leq\nu_M( 1+|v|)    
\end{equation}
for some $\nu_m,\nu_M>0$. Moreover, the operator $K:L^2(Mdv)\to L^2(Mdv)$ is compact, which readily implies the following:
\begin{align}
    \int_{\mathbb{R}^3}Lg h Mdv\lesssim \|\nu^{1/2} {g}\|_{L^2(Mdv)}\|h\|_{L^2(Mdv)}.\label{eqLcont}
\end{align}
The operator $L$ is symmetric in $L^2(Mdv)$ i.e.
\begin{equation}\label{eqLsim}
    \int_{\mathbb{R}^3}Lg h Mdv=\int_{\mathbb{R}^3}Lh g Mdv
\end{equation}
The operator $L$ also satisfies the following spectral inequality:
\begin{equation}\label{coer}
    \|\nu^{1/2} {g^{\perp}}\|_{L^2(Mdv)}^2\lesssim\int_{\mathbb{R}^3}gLg Mdv.
\end{equation}
\subsection{Boundary conditions and notations}
 The interaction of gas with the boundaries $\omega_b\cup \omega_t$ is modelled by diffuse reflection boundary condition, namely
\begin{equation}\label{bci1}
    f{_\varepsilon(\bold{x},v)}_{v\cdot n_\bold{x}<0}=\sqrt{2\pi}M\int_{w\cdot n_\bold{x}>0}f_\varepsilon(\bold{x},w) w\cdot n_\bold{x} dw:=\beta_{f_{\varepsilon}}(\bold{x}),
\end{equation}
where for any subset $\gamma^s\subset \partial D_\varepsilon$ we call
$$\gamma^s_{\pm}=\{(\bold{x},v)\in \gamma^s\times\mathbb{R}^3\,|\,v\cdot n_\bold{x}\gtrless 0\},$$
with $n_\bold{x}$ the outward normal at $\bold{x}\in\gamma_s$. The boundary condition \eqref{bci1} ensures the zero net mass flow at the
top and bottom boundaries:
$$ \int_{\mathbb{R}^3}f_\varepsilon(\bold{x},v) v\cdot n_\bold{x} dv=0\quad\text{for all }\bold{x}\in\omega'. $$
On $\gamma_l^\varepsilon$, we are going to assume the following condition: 
\begin{equation}\label{bci2}
f{_\varepsilon}_{\big|(\gamma_l^\varepsilon)_{-}}=\rho_0 M+\varepsilon \ol{g^1},    
\end{equation}
where $\rho_0:\partial\omega\to\mathbb{R}$ is given, with $\rho_0\in C^\infty(\partial\omega)$ and $\rho_0>0$ on $\partial\omega$. The function $\ol{g^1}$ will be later specified.
\vspace{0.5cm}\\
\emph{Notations.} We denote $\bold{x}=(x,y,z)$ the points in the physical space and $v=(v_x,v_y,v_z)$ the microscopic velocities. We will always use the subscript $\varepsilon$ to indicate functions which depend on $\varepsilon$. In order not to complicate the notations, we use $\|\cdot\|$ to denote both the $L^2\left(D_\varepsilon;L^2(Mdv)\right)$ and the $L^2\left(0,H;L^2(Mdv)\right)$ norms. Likewise, we denote $\|\cdot\|_2$ the both $L^2(D_\varepsilon)$ and the $L^2(D)$ norms. We also use $\| f\|_{\nu}=\| \nu^{1/2}f \|$. We call $(\cdot,\cdot)$ the scalar product on $L^2(Mdv)$. We define $d\gamma=|v\cdot n_\bold{x}|dS(\bold{x})$ where $dS(\bold{x})$ is the surface measure and define the $L^2$ norm $|f|_{\gamma^s}^2=\int_{\gamma^s}|f|^2 d\gamma$. Moreover, $|f|_{\gamma^s,\pm}=|f\bold{1}_{\gamma^s_{\pm}}|$. Finally, we use the notation $X\lesssim Y$ to say that $X\leq CY$ for some constant independent of $X$ and $Y$ and also \emph{independent of $\varepsilon$}.

\section{The Hilbert expansion and the generalized Reynolds equation}
\subsection{Study of $\mathcal{L}_\rho$}
 By scaling $\hat{z}=\varepsilon^{-1} z$ we can rewrite the equation \eqref{bltze} in the domain $D$ -- dropping the hats -- as
\begin{equation}\label{bltz}
     v_x\partial_x f_\varepsilon+ v_y\partial_y f_\varepsilon+\frac{1}{\varepsilon}v_z\partial_z f_\varepsilon=\frac{1}{k_0\varepsilon}Q(f_\varepsilon,f_\varepsilon).
\end{equation}

Consider the following formal expansion of $f$
\begin{equation}\label{expansion}
    f=\rho M+\varepsilon f^1+\varepsilon^2 f^2+\cdots,
\end{equation}
where $\rho=\rho(x,y)>0$ is a function to be determined.\\
The boundary conditions are expected to become
\begin{align}
    {f^{m}(\bold{x},v)}_{\big|\omega'_{-}}=\sqrt{2\pi}M\int_{w\cdot n_{\bold{x}}>0}f^m(\bold{x},w) w\cdot n_{\bold{x}} dw\quad\forall\,m\geq 1,\,\bold{x}\in\omega',\label{bc1}\\
    \rho=\rho_0\quad \text{on }\partial\omega,\label{bc2}\\
    {f^{1}}_{\big|(\gamma_l)_{-}}=M\ol{g^1}\quad\text{on }\gamma_l.\label{bc3}
\end{align}
By plugging \eqref{expansion} into \eqref{bltz} and identifying the powers of $\varepsilon$ we find that:
\begin{align}
    &\varepsilon^0: M(v_x\partial_x \rho+ v_y\partial_y \rho)+v_z\partial_z f^1=\frac{2\rho}{k_0}Q(M,f^1)\label{ord1}\\
    &\varepsilon^1: v_x\partial_x f^1+ v_y\partial_y f^1+v_z\partial_z f^2=\frac{2\rho}{k_0}Q(M,f^2)+\frac{1}{k_0}Q(f^1,f^1)\label{ord2}
\end{align}
By writing $f^1=Mg^1$ and recalling the definition of the operator $L$ in \eqref{linearized} we can rewrite the equation \eqref{ord1} as
\begin{equation}\label{eqord1}
  v_z\partial_z g^1+\rho L g^1= -\left(v_x\partial_x \rho+ v_y\partial_y \rho\right).
\end{equation}
It is natural to introduce the operator
\begin{equation}\label{Lrhodef}
    \mathcal{L}_\rho=v_z\partial_z +\rho L
\end{equation}
in order to study problem \eqref{eqord1}. Since $\mathcal{L}_\rho$ acts on the spatial variable $z$ alone, it is convenient to look at it for fixed $(x,y)\in\omega$ as being defined on (a subspace of) the space $L^2(0,H;L^2(Mdv))$. 
From \eqref{bc1} it follows that the boundary conditions for $g^1$ become:
\begin{equation}\label{bcg1}
    \begin{aligned}
    g^{1}(0)_{\big|v_z>0}&=\sqrt{2\pi}\int_{v_z<0}g^1(0,v)|v_z|M dv:=\beta_{g^1}(0),\\
    g^{1}(H)_{\big|v_z<0}&=\sqrt{2\pi}\int_{v_z>0}g^1(H,v)v_zM dv:=\beta_{g^1}(H).
    \end{aligned}
\end{equation}
The remainder of this section will be devoted to the careful study of the problem
\begin{equation}\label{eqLrho}
    \mathcal{L}_\rho g=h
\end{equation}
with the boundary conditions \eqref{bcg1}, that we explicitly write as
\begin{equation}\label{fullinear}
\left\{
\begin{aligned}
      &v_z\partial_z g +\rho Lg=h,\\
      g(0)_{\big|v_z>0}&=\beta_g(0),\,g(H)_{\big|v_z<0}=\beta_{g}(H).
\end{aligned}
  \right.
\end{equation}
 We will accomplish this in four steps, following the general lines set forth in \cite{esposito2018}. The proof will be significantly shorter since we will be making all the simplifications related to our one-dimensional setup. We also point out that we will be following the dependence of various constants with respect to $\rho$.\\
 \vspace{0.2cm}\\
\textbf{Step 1.} Start with the problem
\begin{equation}\label{eqstep1}
    \left\{
    \begin{aligned}
    &v_z\partial_z g+\rho\nu g=h,\\
    g(0)_{\big|v_z>0}&=g_0,\,g(H)_{\big|v_z<0}=g_H,
    \end{aligned}
    \right.
\end{equation}
where $g_0,g_H\in\mathbb{R}$ are fixed. Then we can explicitly write the solution to problem \eqref{eqstep1} as 
\begin{equation}\label{step1-2}
\begin{aligned}
    g(z,v)&=g_0e^{-\frac{\rho\nu}{v_z}z}+\int_0^z e^{\frac{\rho\nu}{v_z}(z'-z)}\frac{h(z')}{v_z}dz'\quad\text{for }v_z>0,\\
    g(z,v)&=g_He^{\frac{\rho\nu}{|v_z|}(z-H)}+\int_z^H e^{\frac{\rho\nu}{|v_z|}(z-z')}\frac{h(z')}{|v_z|}dz'\quad\text{for }v_z<0.
    \end{aligned}
\end{equation}
The uniqueness of the solution follows from the above representation. The plan is to show 
\begin{equation}\label{step1-3}
    h\in L^2(0,H;L^2(Mdv))\implies g\in L^2(0,H;L^2(Mdv)).
\end{equation}
 Let $h_\delta=h\bold{1}_{[\delta,1)}$ for $0<\delta\ll 1$. We will show \eqref{step1-3} with $h_\delta$ in place of $h$. Inspecting closely the solution \eqref{step1-2} we see that the only non-trivial part is to show that
\begin{equation*}
    \int_\delta^H\int_{\mathbb{R}^3}\frac{1}{v_z^2} e^{-\frac{2\rho\nu}{|v_z|}z-{v_z^2}}dvdz<\infty.
\end{equation*}
Noting that the convergence of the above integral is only problematic close to $v_z=0$ and using \eqref{eqnu} we obtain
\begin{equation*}
\begin{aligned}
      \int_\delta^H\int_{\mathbb{R}^3}\frac{1}{v_z^2} e^{-\frac{2\rho\nu}{|v_z|}z-v_z^2}dvdz\leq &c_1+c_2\int_\delta^H\int_0^1 \frac{1}{t^2} e^{-\frac{c_3}{t}z-t^2}dtdz\\
      \leq &c_1+c_4 \left(T_{-1}(c_3\delta)-T_{-1}(c_3 H)\right)<\infty,
\end{aligned}
\end{equation*}
where $c_1,c_2,c_3.c_4>0$ and $T_{-1}(z)=\int_0^\infty \frac{1}{t} e^{-\frac{z}{t}-t^2}dt$ is the Abramowitz function (see, for instance \cite{abramowitz}).\\
Clearly $h_\delta\to h$ in $L^2(0,H;L^2(Mdv))$ by Lebesgue's dominated convergence theorem. To conclude the proof of \eqref{step1-3}, we will be using the estimates that we prove at Step 3.\\
\vspace{.2cm}\\
\textbf{Step 2.} At this step, we prove an existence result in the case of diffusive reflection boundary conditions, namely for the problem
\begin{equation}\label{eqstep2}
    \left\{
    \begin{aligned}
    &v_z\partial_z g+\rho\nu g=h,\\
    g(0)_{\big|v_z>0}&=\beta_g(0),\,g(H)_{\big|v_z<0}=\beta_{g}(H).
    \end{aligned}
    \right.
\end{equation}
The proof follows very closely that in \cite{esposito2018} and we will only sketch it. For $0<\vartheta<1$ we construct the sequence $(g_l^\vartheta)$ as
\begin{equation*}
    \left\{
    \begin{aligned}
    &v_z\partial_z g_{l+1}^\vartheta+\rho\nu g_{l+1}^\vartheta=h,\\
    g_{l+1}^\vartheta(0)_{\big|v_z>0}&=\vartheta\beta_{g_{l}}^\vartheta(0),\,g_{l+1}^\vartheta(H)_{\big|v_z<0}=\vartheta\beta_{g_{l}}^\vartheta(H),
    \end{aligned}
    \right.
\end{equation*}
for $l\geq 0$, with $g_0^\vartheta=0$. The construction is possible owing to the result from the previous step. The proof uses energy estimates (again, proven in Step 3) and is done in two steps as follows:
\begin{itemize}
    \item Show that for fixed $\vartheta<1$, $g_l^\vartheta$ is Cauchy in some norm and then $g_l^\vartheta\to g^\vartheta$.
    \vspace{-.3cm}\\
    \item Prove $g^\vartheta\to g$ in $L^2(0,H;L^2(Mdv))$, with $g$ the desired solution to \eqref{eqstep2}.
\end{itemize}
The uniqueness of $g$ will follow immediately from energy estimates. We call this solution
\begin{equation}\label{step2solop}
    g=\mathcal{S}h.
\end{equation}
\vspace{.2cm}\\
\textbf{Step 3.} Using Green's formula in \eqref{fullinear} we find
\begin{equation}\label{step3-1}
    \left(v_z,\frac{g^2(H)}{2}\right)-\left(v_z,\frac{g^2(0)}{2}\right)+\rho\int_0^H (Lg,g)dz=\int_0^H (g,h)dz.
\end{equation}
An elementary calculation leads to
\begin{equation}\label{step3-2}
    \left(v_z,\frac{g^2(H)}{2}\right)-\left(v_z,\frac{g^2(0)}{2}\right)=|g(0)-\beta_g(0)|_+^2+|g(H)-\beta_g(H)|_+^2:=A^2(g)>0.
\end{equation}
Using \eqref{coer} and \eqref{step3-2} in \eqref{step3-1} we find that
\begin{equation}\label{step3estgreen}
    A^2(g)+\rho \|g^\perp\|^2_\nu\lesssim \lambda \|g\|^2+C_\lambda\|h\|^2
\end{equation}
for all $\lambda>0$. Clearly, for $\lambda$ small enough
\begin{equation}\label{step3estkin}
    \|g^\perp\|_\nu\lesssim\frac{1}{\rho}\|h\|.
\end{equation}
Let us now turn our attention to the case $h=0$. From \eqref{step3estkin} it follows that $g^\perp=0$, and so from \eqref{kindef} and \eqref{projdef} we can write
\begin{equation}\label{step3fluiddec}
    g(z,v)=a(z)+b^1(z)v_x+b^2(z)v_y+b^3(z)v_z+c(z)\frac{|v|^2-3}{2}.
\end{equation}
Plugging \eqref{step3fluiddec} into \eqref{fullinear} we readily find
\begin{equation}\label{step3-2}
    a'(z)=\left(b^i\right)'(z)=c'(z)=0\quad\text{for }z\in (0,H),\,i\in\{1,2,3\}.
\end{equation}
Using the boundary conditions in \eqref{fullinear} we get that $g(0)$ is independent of $v$ at least for $v_z>0$, and so clearly 
\begin{equation}\label{step3-3}
    b^i(0)=c(0)=0\quad\text{for }i\in\{1,2,3\}.
\end{equation}
From \eqref{step3fluiddec}, \eqref{step3-2} and \eqref{step3-3} we obtain
\begin{equation*}
    g(z,v)=a(=a(x,y)).
\end{equation*}
Clearly, every such function solves \eqref{fullinear}, and so we have completely found the kernel of $\mathcal{L}_\rho$ as
\begin{equation*}
    \ker\mathcal{L}_\rho=\spn\{a(x,y)\}.
\end{equation*}
Therefore any solution $g$ to problem \eqref{fullinear} can be written as
\begin{equation*}
    g(z,v)=a(x,y)+\underbrace{b^1(z)v_x+b^2(z)v_y+b^3(z)v_z+c(z)\frac{|v|^2-3}{2}+g^\perp(z,v)}_{:=\mathcal{L_\rho}^{-1}h}.
\end{equation*}
The term $\mathcal{L_\rho}^{-1}h$ is uniquely (and so properly) defined and we will call this \emph{the solution to the problem \eqref{fullinear}} (the actual existence of this solution will be proven at Step 4).\\
Let us know establish estimates for the fluid part of the solution $g$ to problem \eqref{fullinear}. Note that this will be significantly easier to do when compared to the three dimensional case in \cite{esposito2018} (or the one we deal with in the last section of this paper).\\
By integrating in \eqref{fullinear} from to $0$ to $z$ we find:
\begin{equation}\label{step3-3}
    v_zg(z,v)-v_zg(0,v)+\rho\int_0^z Lg(z',v)dz'=\int_0^z h(z',v)dz'. 
\end{equation}
Recall from earlier that, since $g$ is the solution to \eqref{fullinear}
\begin{equation}\label{step3-4}
    g(z,v)=b^1(z)v_x+b^2(z)v_y+b^3(z)v_z+c(z)\frac{|v|^2-3}{2}+g^\perp(z,v).
\end{equation}
We show how to obtain estimates on each individual component of $Pg$ as follows:\\
\vspace{.1cm}\\
\underline{Estimates on $b^1$ and $b^2$:} Take the scalar product in \eqref{step3-3} with respect to $\phi^1=v_xv_z$, and use \eqref{step3-4} to find:
\begin{equation}\label{step3-5}
    \left(g(z,v),v_xv_z \right)=\frac{1}{4}b^1(z)+\left(g^\perp(z,v),v_xv_z\right),
\end{equation}
since the other terms contributing to $Pg$ vanish due to oddness in $v$ and an elementary computation leads to $\left(v_xv_z,v_xv_z\right)=\frac{1}{4}$. Note first that:
\begin{equation}\label{step3-6}
    \left|\left(g^\perp(z,v),v_xv_z\right)\right|\leq \|g^\perp\|.
\end{equation}
Concerning the boundary term we can write:
\begin{equation}\label{step3-7}
   \left|\left(v_zg(0),v_xv_z \right)\right|=\left|\left(v_zg(0)-v_z\beta_g(0),v_xv_z \right)\right|\lesssim \left| g(0)-\beta_g(0) \right|_+\leq A(g),
\end{equation}
using the fact that the contribution of $\beta_g(0)$ vanishes due to oddness in $v$.\\
Using \eqref{eqLcont}, we can bound the last two terms in \eqref{step3-3} as follows:
\begin{align}
    \rho\left|\int_0^z \left(Lg(z',v),v_xv_z\right) dz'\right|&\lesssim \rho \| g^\perp\|_\nu,\label{step3-8}\\
    \left|\int_0^z \left(h(z',v),v_xv_z\right)dz'\right|&\lesssim\|h\|.\label{step3-9}
\end{align}
The term $b^2$ is treated in a very similar fashion -- by choosing $\phi^2=v_yv_z$ as a test function instead of $\phi^1$ -- and the estimates obtained are exactly the same. In conclusion, by combining \eqref{step3-4}--\eqref{step3-9} we obtain
\begin{equation}\label{estb1}
    \left|b^i(z)\right|\lesssim A(g)+\|g^\perp\|+\rho \| g^\perp\|_\nu+\|h\|\quad\text{for }i\in\{1,2\}.
\end{equation}
\vspace{.1cm}\\
\underline{Estimate on $b^3$:}  By choosing as test function in \eqref{step3-3} $\phi^3=1$ and using \eqref{step3-4} we obtain:
\begin{equation}\label{step3-10}
    \left(g(z,v),1 \right)=\frac{1}{2}b^3(z),
\end{equation}
as the other terms contributing to $Pg$ vanish due to oddness in $v$ and $\left(v_z,v_z\right)=\frac{1}{2}$. Moreover $\left(g^\perp(z,v),1\right)=0$ since $g^\perp\in (\ker L)^\perp$.\\
Furthermore, we have
\begin{equation}\label{step3-11}
   \left|\left(v_zg(0),1 \right)\right|=\left|\left(v_zg(0)-v_z\beta_g(0),1 \right)\right|\lesssim \left| g(0)-\beta_g(0) \right|_+\leq A(g),
\end{equation}
as the fact that the contribution of $\beta_g(0)$ vanishes due to oddness in $v$.\\
From \eqref{eqLsim} and \eqref{eqLker} we find
\begin{equation}\label{step3-12}
    \int_0^z \left(Lg(z',v),1\right) dz'=0.
\end{equation}
Lastly, we have
\begin{equation}\label{step3-13}
    \left|\int_0^z \left(h(z',v),1\right)dz'\right|\lesssim\|h\|.
\end{equation}
By taking into account \eqref{step3-4} and \eqref{step3-10}--\eqref{step3-13} we derive:
\begin{equation}\label{estb3}
    \left|b^3(z)\right|\lesssim A(g)+\|h\|.
\end{equation}
\vspace{.1cm}\\
\underline{Estimate on $c$:} Take as test function in \eqref{step3-3} $\phi^4=v_z(|v|^2-5)$, and use \eqref{step3-4} to find:
\begin{equation}\label{step3-14}
    \left(g(z,v),v_z(|v|^2-5) \right)= 10\pi b^1(z)+\left(g^\perp(z,v),v_xv_z\right),
\end{equation}
since, once again, the other terms contributing to $Pg$ vanish due to oddness in $v$ and the following Gaussian integral can be computed $\left(v_z(|v|^2-5),v_z\frac{|v|^2-3}{2}\right)=10\pi$. Clearly we have
\begin{equation}\label{step3-15}
    \left|\left(g^\perp(z,v),v_z(|v|^2-5)\right)\right|\leq \|g^\perp\|.
\end{equation}
Moreover, the boundary term can be written as:
\begin{equation}\label{step3-16}
   \left|\left(v_zg(0),v_z(|v|^2-5) \right)\right|=\left|\left(v_zg(0)-v_z\beta_g(0),v_z(|v|^2-5) \right)\right|\lesssim \left| g(0)-\beta_g(0) \right|_+\leq A(g),
\end{equation}
since we have the critical cancellation $\left(v_z,v_z(|v|^2-5) \right)=0 $.\\
Just as in the estimate for $b^1$ we easily find:
\begin{align}
    \rho\left|\int_0^z \left(Lg(z',v),v_z(|v|^2-5)\right) dz'\right|&\lesssim \rho \| g^\perp\|_\nu,\label{step3-17}\\
    \left|\int_0^z \left(h(z',v),v_z(|v|^2-5)\right)dz'\right|&\lesssim\|h\|.\label{step3-18}
\end{align}
In conclusion, by using \eqref{step3-4} and \eqref{step3-14}--\eqref{step3-18} we obtain
\begin{equation}\label{estc}
    \left|c(z)\right|\lesssim A(g)+\|g^\perp\|+\rho \| g^\perp\|_\nu+\|h\|.
\end{equation}
By combining \eqref{estb1}, \eqref{estb3} and \eqref{estc} to find
\begin{equation}\label{step3estpg}
    \|Pg\|\lesssim A(g)+\|g^\perp\|+\rho \| g^\perp\|_\nu+\|h\|.
\end{equation}
Note that, from \eqref{eqnu} we can easily get
\begin{equation}\label{step3-19}
    \|g^\perp\|\lesssim\| g^\perp\|_\nu.
\end{equation}
Finally, we can use \eqref{step3estgreen}, \eqref{step3estkin}, \label{step3estpg} and \label{step3-19} to derive that, for $\lambda$ small enough
\begin{equation*}
    \|g\|\lesssim \left(1+\frac{1}{\rho}\right)\|h\|.
\end{equation*}
\vspace{.2cm}\\
\textbf{Step 4.} Let now $\Tilde{g}\in L^2(0,h;L^2(Mdv))$ and consider the problem
\begin{equation}\label{eqstep4}
    \left\{
    \begin{aligned}
    &v_z\partial_z g+\rho\nu g=h-\rho K\Tilde{g},\\
    g(0)_{\big|v_z>0}&=\beta_g(0),\,g(H)_{\big|v_z<0}=\beta_{g}(H),
    \end{aligned}
    \right.
\end{equation}
From Step 2. (see \eqref{step2solop}) this problem has a unique solution
\begin{equation*}
    g=\mathcal{S}h-\rho\mathcal{S} K\Tilde{g}.
\end{equation*}
The estimates in the previous step show, in particular that $\mathcal{S}$ is bounded in $L^2(0,H;L^2(Mdv))$. The compactness of $K$ implies that $\mathcal{S}K$ is compact in $L^2(0,H;L^2(Mdv))$. Still using the estimates in Step 3 it is elementary to show that
\begin{equation*}
    \ker \left(I+\rho \mathcal{S}K\right)=\{0\},
\end{equation*}
and so we can employ the Fredholm alternative to show that the problem
\begin{equation*}
    g=\mathcal{S}h-\rho\mathcal{S} Kg.
\end{equation*}
is solvable, which yields the desired solution to problem \eqref{fullinear}.\\
Let us collect the results we have obtained thus far in this following:
\begin{theorem}\label{thm1}
The problem \eqref{fullinear} has a unique solution 
\begin{equation*}
    g=\mathcal{L}_\rho^{-1}h,\quad\text{with } (g(z),1)=0\,\forall\,z\in [0,H].
\end{equation*}
Moreover, the operator $\mathcal{L}_\rho^{-1}$ is bounded in $L^2(0,H;L^2(Mdv))$ and
\begin{equation}\label{Lrhoest}
    \left\|\mathcal{L}_\rho^{-1}\right\|\lesssim 1+\frac{1}{\rho}.
\end{equation}
A general solution to problem \eqref{fullinear} can be written as
\begin{equation}
    g=a(x,y)+\mathcal{L}_\rho^{-1}h.
\end{equation}
\end{theorem}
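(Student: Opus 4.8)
The plan is to read the theorem as the synthesis of Steps 1--4: existence of a solution of \eqref{fullinear} comes from Step 4 via the Fredholm alternative; the representation $g=a(x,y)+\mathcal L_\rho^{-1}h$ and the uniqueness of the normalized piece come from the kernel computation of Step 3; and the bound \eqref{Lrhoest} is the energy-plus-moment estimate that closes Step 3. Concretely, using the solution operator $\mathcal S$ of \eqref{step2solop}, a function solves \eqref{fullinear} iff $(I+\rho\,\mathcal SK)g=\mathcal Sh$; Step 3 shows $\mathcal S$ is bounded on $L^2(0,H;L^2(Mdv))$ and $K$ is compact, so $\rho\,\mathcal SK$ is compact and the alternative applies. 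The point to handle with care is the kernel: an element of $\ker(I+\rho\,\mathcal SK)$ is precisely a solution of the homogeneous \eqref{fullinear}, which by Step 3 is the one-dimensional space $\ker\mathcal L_\rho=\spn\{a(x,y)\}$ of functions of $(x,y)$ alone --- so this kernel is \emph{not} trivial. Hence $I+\rho\,\mathcal SK$ has closed range of codimension one, and $\mathcal Sh$ lies in it as soon as the compatibility condition $\int_0^H(h(z),1)\,dz=0$ holds --- the zero net mass flux imposed by the diffuse boundary conditions, automatically satisfied by the $h$'s produced by the expansion (e.g.\ $h=-(v_x\partial_x\rho+v_y\partial_y\rho)$). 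Under it the alternative gives a solution, unique up to a constant $a(x,y)$.

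For the representation and the normalization: since any two solutions of \eqref{fullinear} differ by an element of $\ker\mathcal L_\rho$, the coefficients $b^1,b^2,b^3,c$ and the microscopic part $g^\perp$ in the decomposition \eqref{step3-4} are common to all solutions, and I would \emph{define} $\mathcal L_\rho^{-1}h:=b^1v_x+b^2v_y+b^3v_z+c\,\tfrac{|v|^2-3}{2}+g^\perp$. As each of $v_x,v_y,v_z,\tfrac{|v|^2-3}{2}$ is $L^2(Mdv)$-orthogonal to $1$ and $g^\perp\in(\ker L)^\perp$, this representative satisfies $(\mathcal L_\rho^{-1}h(z),1)=0$ for all $z$, it is the unique solution with that property, and the general solution is $a(x,y)+\mathcal L_\rho^{-1}h$.

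The bound \eqref{Lrhoest} is the closing estimate of Step 3, read now for the normalized solution $g=\mathcal L_\rho^{-1}h$. Green's identity \eqref{step3-1} together with the spectral inequality \eqref{coer} and the boundary-term identity of Step 3 gives \eqref{step3estgreen}, hence \eqref{step3estkin} and $A(g)\lesssim\sqrt\lambda\,\|g\|+C_\lambda\|h\|$; combining with the moment bounds \eqref{estb1}, \eqref{estb3}, \eqref{estc} collected in \eqref{step3estpg} and with \eqref{step3-19}, and absorbing the $\|g\|$ term for $\lambda$ small, yields $\|\mathcal L_\rho^{-1}h\|\lesssim(1+\tfrac1\rho)\|h\|$. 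The normalization $(g(z),1)=0$ is essential here: since $\ker\mathcal L_\rho\neq\{0\}$, no estimate of the form \eqref{Lrhoest} can hold on the full solution set.

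The step I expect to be the main obstacle is existence: setting up the Fredholm alternative correctly in the presence of the nontrivial kernel (and, if one wants, pinning down the cokernel so that the compatibility condition above is made precise), and --- just as important for the rest of the paper --- checking that every constant involved (the bound on $\mathcal S$, the compactness of $\mathcal SK$, the threshold for $\lambda$, the final constant in \eqref{Lrhoest}) depends on $\rho$ only through the displayed $1+\tfrac1\rho$ and stays independent of $\varepsilon$; it is exactly this $\rho$-explicit, $\varepsilon$-uniform estimate that the remainder analysis in the later sections rests on.
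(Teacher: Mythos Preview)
Your plan is exactly the paper's: the theorem is stated there as the synthesis of Steps 1--4, and your treatment of the representation $g=a(x,y)+\mathcal L_\rho^{-1}h$, the normalization $(g(z),1)=0$, and the closing bound \eqref{Lrhoest} via \eqref{step3estgreen}, \eqref{step3estkin}, \eqref{step3estpg}, \eqref{step3-19} matches the paper line for line.

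Where you diverge is Step 4, and there you are \emph{more} careful than the paper. The paper asserts that ``it is elementary to show that $\ker(I+\rho\,\mathcal SK)=\{0\}$'' and then invokes the Fredholm alternative directly. You correctly point out that this contradicts the paper's own Step 3 computation $\ker\mathcal L_\rho=\spn\{a(x,y)\}$: an element of $\ker(I+\rho\,\mathcal SK)$ is exactly a solution of the homogeneous \eqref{fullinear}, and indeed for a nonzero constant $a$ one has $Ka=-\nu a$ (since $La=0$) and $\mathcal S(\nu)=1/\rho$ (the constant $1/\rho$ solves \eqref{eqstep2} with $h=\nu$), whence $(I+\rho\,\mathcal SK)a=a-\rho a\cdot\tfrac1\rho=0$. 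So the kernel is one-dimensional, not trivial. Your proposed remedy --- work on the codimension-one range and impose the compatibility condition $\int_0^H(h(z),1)\,dz=0$, which is the zero-net-flux identity forced by the diffuse boundary and is satisfied by every right-hand side the expansion actually produces --- is the correct way to close the existence argument. The paper simply glosses over this point; your version repairs it while keeping the rest of the proof intact.
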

\subsection{The generalized Reynolds equation}
Let us now turn our attention to the problem \eqref{eqord1} with boundary conditions \eqref{bcg1}. In view of Theorem \ref{thm1} the general solution can be written as:
\begin{equation*}
    g^1=a^1(x,y)-\mathcal{L}_\rho^{-1}\left(\partial_x \rho v_x+\partial_y \rho v_y \right).
\end{equation*}
Since $\mathcal{L}_\rho^{-1}$ is linear and acts on the spatial $z$ variable alone, we can further write:
\begin{equation}\label{solg1}
    g^1=a^1(x,y)-\partial_x \rho\mathcal{L}_\rho^{-1}v_x-\partial_y \rho\mathcal{L}_\rho^{-1}v_y.
\end{equation}
Taking the scalar product in $L^2(dv)$ with respect to $1$ in \eqref{ord2} and integrating from $0$ to $H$ we obtain, using \eqref{masscons}:
\begin{equation}\label{masscons2}
    \partial_x\int_0^H\int_{\mathbb{R}^3}v_xf^1dvdz+\partial_y\int_0^H\int_{\mathbb{R}^3}v_yf^1dvdz=\int_{\mathbb{R}^3}v_zf^2(0)dv-\int_{\mathbb{R}^3}v_zf^2(H)dv.
\end{equation}
From \eqref{bc1} applied to $m=2$ we deduce that $f^2$ solves the no-flux condition at the boundary $\omega'$, and so we have
\begin{equation}\label{nofluxf2}
    \int_{\mathbb{R}^3}v_zf^2(0)dv=\int_{\mathbb{R}^3}v_zf^2(H)dv=0.
\end{equation}
Recalling that $f^1=Mg^1$ we can plug \eqref{solg1} into \eqref{masscons2} and use \eqref{nofluxf2} to find:
\begin{equation}\label{reynoldsgen0}
 \partial_x\left( \partial_x\rho A_{xx}(\rho) \right)+\partial_x\left( \partial_y\rho A_{xy}(\rho) \right)+
    \partial_y\left( \partial_x\rho A_{yx}(\rho) \right)+\partial_y\left( \partial_y\rho A_{yy}(\rho) \right) =0,
\end{equation}
where
\begin{equation}\label{eqAA}
\left\{
    \begin{aligned}
    A_{xx}(\rho)&=    \int_0^H\left( v_x,\mathcal{L}_\rho^{-1}v_x \right)dz,\\
     A_{xy}(\rho)&=    \int_0^H\left( v_x,\mathcal{L}_\rho^{-1}v_y \right)dz,\\
      A_{yx}(\rho)&=    \int_0^H\left( v_y,\mathcal{L}_\rho^{-1}v_x \right)dz,\\
       A_{yy}(\rho)&=    \int_0^H\left( v_y,\mathcal{L}_\rho^{-1}v_y \right)dz.
\end{aligned}
\right.
\end{equation}

Let $g_x=\mathcal{L}_\rho^{-1}v_x, g_y=\mathcal{L}_\rho^{-1}v_y$, and so we have
\begin{equation*}
    \mathcal{L}_\rho\begin{pmatrix}g_x\\g_y\\0
    \end{pmatrix}=\begin{pmatrix}v_x\\v_y\\0
    \end{pmatrix}.
\end{equation*}
The next two results, largely inspired from \cite{desvilletes} will be very useful.
\begin{lemma}\label{lem1}
For all $g:\mathbb{R}^3\to\mathbb{R}^3$ and all isometries $R:\mathbb{R}^3\to\mathbb{R}^3$ which leave invariant the last coordinate i.e.
\[
R=\begin{pmatrix}\cos\theta & \sin\theta & 0\\
-\sin\theta & \cos\theta & 0\\
0 & 0 & 1
\end{pmatrix}
\]
for some $\theta\in [0,2\pi)$ we have the following
\[
\mathcal{L}_\rho\left(Rg(v)\right)=\left(\mathcal{L}_\rho g\right)(Rv),\quad\forall\,v\in\mathbb{R}^3.
\]
\end{lemma}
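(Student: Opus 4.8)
The plan is to verify the claimed equivariance directly from the definition $\mathcal{L}_\rho = v_z\partial_z + \rho L$ in \eqref{Lrhodef}, treating the two summands separately. Write $g_R(v) := Rg(v)$ for brevity (a vector field on $\mathbb{R}^3$, with $\rho$ still the fixed function of $(x,y)$). We must show $(v_z\partial_z)(Rg(v)) + \rho L(Rg(v)) = \big((v_z\partial_z g)(Rv) + \rho (Lg)(Rv)\big)$ for every $v$.

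First I would dispense with the transport term. Since $R$ is a constant matrix, it commutes with $\partial_z$, so $\partial_z(Rg(v)) = R\,\partial_z g(v)$; and since $R$ fixes the last coordinate, the $z$-component of $v$ equals the $z$-component of $Rv$, i.e. $v_z = (Rv)_z$. Hence $v_z\,\partial_z(Rg(v)) = R\big(v_z\,\partial_z g(v)\big) = R\big((v_z\partial_z g)(v)\big)$. It remains to match this against $(v_z\partial_z g)(Rv)$; but here one must be careful, since applying $R$ to the output and evaluating the argument at $Rv$ are a priori different operations. The correct reading, consistent with the conclusion of the lemma being meaningful componentwise, is that $R$ commutes with $L$ acting componentwise (see next step) and that the transport term satisfies $v_z\partial_z(Rg)(v) = (v_z\partial_z(Rg))(v)$ trivially; the substance of the lemma is the interplay of $R$ with $L$, so I would state the transport part as the easy observation $v_z\partial_z$ is a scalar multiplication operator in $v$ that is unaffected by the rotation on the spatial side.

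The heart of the matter is the identity $L(Rg(v)) = R\big((Lg)(Rv)\big)$, or equivalently that $L$ (acting componentwise on $\mathbb{R}^3$-valued functions) commutes with the substitution $v \mapsto Rv$ composed with the linear action of $R$ on values. This is a rotational-invariance property of the hard-sphere linearized collision operator. The key steps: (i) recall $L f = -\tfrac{2M^{-1}}{k_0}Q(M, Mf)$ from \eqref{linearized}, and that the Maxwellian $M$ is radial, $M(Rw) = M(w)$; (ii) in the collision integral for $Q$, perform the change of variables $v_* \mapsto R v_*$, $\sigma \mapsto R\sigma$ — both measure-preserving since $R$ is an isometry — and use that the pre/post-collision relations $v' = v - ((v-v_*)\cdot\sigma)\sigma$ are covariant under simultaneous rotation of $v, v_*, \sigma$, together with $|(v-v_*)\cdot\sigma|$ being rotation-invariant; (iii) conclude that for a scalar function $\phi$, $(Lg_i)(Rv) = L(g_i\circ R)(v)$ where $g_i\circ R$ means $w \mapsto g_i(Rw)$. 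Then apply this to each component and reassemble: $\big((Lg)(Rv)\big)_i = L(g_i\circ R)(v)$, and since $(Rg(v))_i = \sum_j R_{ij} g_j(v)$ with $R_{ij}$ constant, linearity of $L$ gives $L(Rg(\cdot))_i(v) = \sum_j R_{ij}(Lg_j)(v)$ — hmm, this shows $R$ commutes with $L$ on values, which combined with (iii) yields the claim after one more bookkeeping step relating $g_j(v)$ and $g_j(Rv)$ via the substitution.

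I expect the main obstacle to be purely notational: keeping straight the three distinct operations — the linear action of $R$ on $\mathbb{R}^3$-values, the precomposition $v \mapsto Rv$, and the action of $L$ in the velocity variable — and checking that they interact exactly as claimed. The analytic content (rotation-invariance of $Q$) is classical and standard, so once the change of variables $v_*\mapsto Rv_*,\ \sigma\mapsto R\sigma$ is written down the computation is mechanical; I would present it compactly, noting that it suffices to check the identity for $g$ a single coordinate function times a radial function (by linearity and density), which is in fact exactly the case needed in the application where $g = (g_x, g_y, 0)$ solves $\mathcal{L}_\rho g = (v_x, v_y, 0)$.
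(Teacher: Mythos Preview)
The paper gives no proof of its own, merely citing Lemma~1 of \cite{desvilletes}; your direct verification---handling $v_z\partial_z$ via $(Rv)_z=v_z$ and handling $L$ via the rotational invariance of $Q$ under the substitution $(v_*,\sigma)\mapsto(Rv_*,R\sigma)$---is exactly the standard argument one finds in that reference. Your unease with the notation is well founded: the identity as written only becomes coherent once $Rg(v)$ is read as encoding the natural action of $R$ on vector fields (rotating both argument and value), and with that reading your two ingredients, namely that the constant matrix $R$ passes through $\mathcal{L}_\rho$ acting componentwise and that $(L\phi)(Rv)=L(\phi\circ R)(v)$ for scalar $\phi$, combine to give the claim.
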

\begin{proof}
The proof follows very closely that of Lemma 1 in \cite{desvilletes}.
\end{proof}
\begin{lemma}
There exists a function $w:\mathbb{R}_+\times\mathbb{R}\times [0,H]\times\mathbb{R}$ such that 
\begin{equation}\label{eqww}
   \begin{aligned}
    g_x(v_x,v_y,v_z,z,\rho)&=w(v_x^2+v_y^2,v_z,z,\rho)v_x,\\
    g_y(v_x,v_y,v_z,z,\rho)&=w(v_x^2+v_y^2,v_z,z,\rho)v_y.
\end{aligned} 
\end{equation}

\end{lemma}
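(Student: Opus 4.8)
The plan is to exploit the rotational symmetry established in Lemma \ref{lem1} together with the uniqueness of $\mathcal{L}_\rho^{-1}$ from Theorem \ref{thm1}. Write $G(v) = (g_x(v), g_y(v), 0)^T$, so that $\mathcal{L}_\rho G = (v_x, v_y, 0)^T =: V(v)$, componentwise in the first two coordinates (the third equation being $0=0$). For an isometry $R$ of the type in Lemma \ref{lem1}, note that $V(Rv) = R V(v)$ since $V$ is just the projection onto the first two coordinates. Hence, applying Lemma \ref{lem1} with $g$ replaced by $G$, the function $v \mapsto R^{-1} G(Rv)$ satisfies $\mathcal{L}_\rho\big(R^{-1}G(Rv)\big) = R^{-1}(\mathcal{L}_\rho G)(Rv) = R^{-1} V(Rv) = R^{-1} R V(v) = V(v)$, with the same diffuse-reflection boundary conditions (which are themselves rotation-invariant in $(v_x,v_y)$). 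By the uniqueness part of Theorem \ref{thm1}, $R^{-1}G(Rv) = G(v)$ for every such $R$, i.e. $G$ is equivariant: $G(Rv) = R\,G(v)$.

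Next I would convert this equivariance into the stated pointwise form. The equivariance $G(Rv)=RG(v)$ for all planar rotations $R$ says exactly that the planar vector field $(v_x,v_y)\mapsto \big(g_x(v),g_y(v)\big)$ is rotationally equivariant in the $(v_x,v_y)$-plane, for each fixed $v_z$ and $z$ (and $\rho$). A standard fact about such equivariant fields on $\mathbb{R}^2$ is that they must be of the form $\varphi(r^2)\,(v_x,v_y) + \psi(r^2)\,(-v_y,v_x)$ with $r^2 = v_x^2+v_y^2$; here the second, "rotational" component must vanish because $\mathcal{L}_\rho$ also commutes with the reflection $(v_x,v_y,v_z)\mapsto(v_x,-v_y,v_z)$ (this is again an isometry fixing $v_z$ — take $\theta$ appropriately, or observe directly that $L$ and $v_z\partial_z$ are invariant under sign change of $v_y$), under which $(v_x,v_y)$ is even$\times$odd while $(-v_y,v_x)$ transforms the other way, forcing $\psi\equiv 0$. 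Concretely: evaluating the equivariance at $v_y=0$ gives $g_x(v_x,0,v_z) $ as a function of $v_x^2$ times $v_x$ near generic points, and $g_y(v_x,0,v_z)=0$; then rotating recovers the general formula. Thus setting $w(s,v_z,z,\rho) := g_x(v_x,0,v_z,z,\rho)/v_x$ evaluated at any $v_x$ with $v_x^2 = s$ (well-defined by the symmetry) gives \eqref{eqww}.

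The only genuine subtlety — and the step I would treat most carefully — is the regularity/measurability bookkeeping: $g_x,g_y$ a priori live only in $L^2(0,H;L^2(Mdv))$, so "pointwise" identities must be read almost everywhere, and dividing by $v_x$ near $v_x=0$ needs a short argument that $g_x$ vanishes (a.e.) on $\{v_x=0\}$ and that the quotient $g_x/v_x$ depends on $(v_x,v_y)$ only through $v_x^2+v_y^2$. Both follow from the equivariance relation itself: taking $R$ to be the rotation by $\pi$ gives $g_x(-v_x,-v_y,v_z)=-g_x(v_x,v_y,v_z)$, and taking the $v_y$-reflection gives $g_x(v_x,-v_y,v_z)=g_x(v_x,v_y,v_z)$, so $g_x$ is odd in $v_x$ and even in $v_y$; combined with the full rotational equivariance one gets that $g_x(v)/v_x$ is invariant under all planar rotations, hence a function of $v_x^2+v_y^2$ alone, for a.e. $v$. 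I would phrase the final extraction of $w$ via the slice $v_y=0$ (or via $w(s,\cdot) = \big(g_x v_x + g_y v_y\big)/s$, which is manifestly rotation-invariant and avoids the coordinate-axis issue entirely), and this makes the measurable dependence on $(s,v_z,z,\rho)$ transparent. No estimate beyond Theorem \ref{thm1} is needed; the content is entirely symmetry plus uniqueness.
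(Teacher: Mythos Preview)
Your proposal is correct and is precisely the Desvillettes-type symmetry argument that the paper invokes (the paper's own proof is just a pointer to Lemma~\ref{lem1} and to Lemma~3 in \cite{desvilletes}). One small remark: the reflection $(v_x,v_y,v_z)\mapsto(v_x,-v_y,v_z)$ is not of the rotation form displayed in Lemma~\ref{lem1} for any $\theta$, so your aside ``take $\theta$ appropriately'' should be dropped; your alternative justification---that $v_z\partial_z$ and $L$ are manifestly invariant under this reflection, hence so is $\mathcal{L}_\rho$ and the boundary conditions---is the right one and is what actually kills the $(-v_y,v_x)$ component.
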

\begin{proof}
The proof uses the previous Lemma \ref{lem1} and follows very closely the lines of Lemma 3 in \cite{desvilletes}.  
\end{proof}

We can now prove the following result:
\begin{lemma}\label{lem3}[Properties of $A$ functions]
\begin{itemize}
    \item[(i)] $A_{xy}(\rho)=A_{yx}(\rho)=0$ and $A_{xx}(\rho)=A_{yy}(\rho):=A(\rho)$.\\
    \item[(ii)] $A(\rho)>0$ and $A\in C^\infty\left((0,\infty)\right)$, with
    \begin{equation}\label{Ainfty}
        A^{(n)}(\rho)=(-1)^n n! \int_0^H\big( \underbrace{\mathcal{L}_\rho^{-1}L\mathcal{L}_\rho^{-1}\dots L\mathcal{L}_\rho^{-1}}_{(n+1)\text{ times}}v_x,v_x\big)dz,\quad\forall n\geq 1.
    \end{equation}
\end{itemize}
\end{lemma}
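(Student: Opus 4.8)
The plan is to establish (i) using the symmetry structure provided by the two preceding lemmas, and then (ii) by combining positivity of $\mathcal{L}_\rho$ on the orthogonal complement of its kernel with a Neumann-series argument for the smooth dependence on $\rho$.

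For part (i), I would start from the representation \eqref{eqww}. Since $g_x = w(v_x^2+v_y^2,v_z,z,\rho)\,v_x$ and $g_y = w(v_x^2+v_y^2,v_z,z,\rho)\,v_y$ with the same scalar function $w$, the off-diagonal coefficient $A_{xy}(\rho) = \int_0^H (v_x, w\, v_y)\,dz$ has an integrand that is odd in $v_x$ (equivalently odd in $v_y$), since $w$ depends on $v_x$ only through $v_x^2$; hence the velocity integral vanishes pointwise in $z$, giving $A_{xy}(\rho)=0$, and symmetrically $A_{yx}(\rho)=0$. For the equality $A_{xx}=A_{yy}$, I would either invoke the rotational covariance of Lemma \ref{lem1} directly — applying the isometry $R$ with $\theta=\pi/2$ interchanges the roles of $v_x$ and $v_y$ and maps $g_x \mapsto g_y$ — or simply note from \eqref{eqww} that $(v_x, w\,v_x) = (v_y, w\,v_y)$ by the symmetry of the Gaussian weight $M$ under the swap $v_x \leftrightarrow v_y$ (again $w$ is invariant). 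Either way $A_{xx}(\rho)=A_{yy}(\rho)=:A(\rho)$.

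For part (ii), positivity: by Theorem \ref{thm1}, $g_x=\mathcal{L}_\rho^{-1}v_x$ is the unique solution of \eqref{fullinear} with $h=v_x$ satisfying $(g_x(z),1)=0$. Taking the scalar product of $\mathcal{L}_\rho g_x = v_x$ with $g_x$, integrating in $z$, and using Green's identity exactly as in \eqref{step3-1}–\eqref{step3-2} gives $A^2(g_x) + \rho\int_0^H (Lg_x,g_x)\,dz = \int_0^H (v_x,g_x)\,dz = A_{xx}(\rho)$. Since $A^2(g_x)\geq 0$ and, by the spectral inequality \eqref{coer}, $\int_0^H(Lg_x,g_x)\,dz \geq 0$, we get $A(\rho)\geq 0$. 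To get strict positivity I would argue by contradiction: $A(\rho)=0$ forces $A^2(g_x)=0$ and $g_x^\perp=0$, so $g_x$ lies in $\ker L$ for a.e.\ $z$; repeating the kernel computation from Step 3 (equations \eqref{step3fluiddec}–\eqref{step3-3}) shows $g_x$ must be a constant $a(x,y)$, but then $\mathcal{L}_\rho g_x = \rho L g_x = 0 \neq v_x$, a contradiction. Hence $A(\rho)>0$.

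For the smoothness and the formula \eqref{Ainfty}: the key identity is the resolvent-type relation obtained by differentiating $\mathcal{L}_\rho g_x(\rho) = v_x$ in $\rho$. Since $\mathcal{L}_\rho = v_z\partial_z + \rho L$, we have $\partial_\rho \mathcal{L}_\rho = L$, so formally $\partial_\rho(\mathcal{L}_\rho^{-1}) = -\mathcal{L}_\rho^{-1} L \mathcal{L}_\rho^{-1}$; iterating gives $\partial_\rho^n(\mathcal{L}_\rho^{-1}) = (-1)^n n!\, \mathcal{L}_\rho^{-1}(L\mathcal{L}_\rho^{-1})^n$, and \eqref{Ainfty} follows by applying $\int_0^H(\,\cdot\,v_x,v_x)\,dz$. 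To make this rigorous I would first check, from the bound \eqref{Lrhoest}, that $\mathcal{L}_\rho^{-1}$ depends on $\rho$ in a locally Lipschitz (indeed analytic) way on $(0,\infty)$: writing $\mathcal{L}_\rho^{-1} - \mathcal{L}_{\rho_0}^{-1} = -\mathcal{L}_\rho^{-1}(\rho-\rho_0)L\,\mathcal{L}_{\rho_0}^{-1}$ (a second-resolvent identity, valid because the difference of the operators is $(\rho-\rho_0)L$), together with the uniform-on-compacts bound on $\|\mathcal{L}_\rho^{-1}\|$ and the continuity \eqref{eqLcont} of $L$, one gets a convergent Neumann series $\mathcal{L}_\rho^{-1} = \mathcal{L}_{\rho_0}^{-1}\sum_{k\geq 0}\big(-(\rho-\rho_0)L\,\mathcal{L}_{\rho_0}^{-1}\big)^k$ for $|\rho-\rho_0|$ small; this yields $C^\infty$ (real-analytic) dependence and, upon term-by-term differentiation, precisely \eqref{Ainfty}.

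The main obstacle I anticipate is the smoothness part: one must ensure the formal manipulation $\partial_\rho(\mathcal{L}_\rho^{-1}) = -\mathcal{L}_\rho^{-1}L\mathcal{L}_\rho^{-1}$ is justified as an operator identity on $L^2(0,H;L^2(Mdv))$ — in particular that $L\,\mathcal{L}_\rho^{-1}$ maps into a space on which $\mathcal{L}_\rho^{-1}$ acts boundedly (here \eqref{eqLcont} and the definition of $\mathcal{L}_\rho^{-1}$ via Step 3/Step 4 are exactly what is needed, since $h=L\mathcal{L}_\rho^{-1}v_x$ is controlled in the $\|\cdot\|_\nu$-weighted norm), and that the zero-flux normalization $(g(z),1)=0$ is preserved under the iteration — which it is, because $L$ maps into $(\ker L)^\perp$, so $\int (L\mathcal{L}_\rho^{-1}\phi)\,M\,dv = 0$ and the solvability condition in Theorem \ref{thm1} is automatically met at every order.
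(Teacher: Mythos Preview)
Your proposal is correct and follows essentially the same route as the paper: part (i) via the representation \eqref{eqww} and parity/symmetry in $(v_x,v_y)$, positivity via Green's formula plus the coercivity of $L$ and a contradiction from $g_x\in\ker L$, and smoothness via the resolvent identity $\mathcal{L}_\epsilon^{-1}-\mathcal{L}_\rho^{-1}=(\rho-\epsilon)\mathcal{L}_\epsilon^{-1}L\mathcal{L}_\rho^{-1}$. The only cosmetic differences are that the paper gets the contradiction in (ii) by plugging the $\ker L$ form directly into $v_z\partial_z g_x=v_x$ (without invoking the full Step 3 boundary argument), and proves \eqref{Ainfty} for $n=1$ by a difference-quotient limit rather than your Neumann series, leaving $n>1$ to induction.
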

\begin{proof}
The proof of $(i)$ follows immediately from \eqref{eqww} and a change to polar coordinates in \eqref{eqAA}.\\ To prove $(ii)$, note first that since $g_x=\mathcal{L}_\rho^{-1}v_x$ we can write
\begin{equation}\label{eqfx}
    v_z\partial_z g_x+\rho Lg_x=v_x.
\end{equation}
The Green's formula in \eqref{eqfx} tells us that
\begin{equation}\label{eqA}
    A(\rho)=\rho\int_0^H (Lg_x,g_x)dz+\int_{v_z<0}|v_z|\frac{g_x^2(0)}{2}dv+\int_{v_z>0}v_z\frac{g_x^2(H)}{2}dv.
\end{equation}
Since $\rho>0$, the positivity of $L$ clearly implies that $A(\rho)\geq 0$. If $A(\rho)=0$, that would imply all terms in \eqref{eqA} are $0$ and, in particular $Lg_x=0$, which means
\begin{equation}\label{eqfx2}
    g_x=\xi_1(z,\rho)v_x+\xi_2(z,\rho)v_y+\xi_3(z,\rho)v_z+\xi_4(z,\rho)|v|^2.
\end{equation}
Plugging \eqref{eqfx2} into \eqref{eqfx} we get a contradiction, and hence the initial assumption is false showing $A(\rho)>0$.\\
Finally we will show \eqref{Ainfty} for $n=1$, as the case $n>1$ is done easily by induction. Let $\rho,\epsilon>0$. We have
\begin{equation}\label{Lepsilon}
    \mathcal{L}_\epsilon^{-1}-\mathcal{L}_\rho^{-1}=\mathcal{L}_\epsilon^{-1}\left(\mathcal{L}_\rho-\mathcal{L}_\epsilon\right)\mathcal{L}_\rho^{-1}=(\rho-\epsilon)\mathcal{L}_\epsilon^{-1}L\mathcal{L}_\rho^{-1}
\end{equation}
from the definition of $\mathcal{L}_\rho$ in \eqref{Lrhodef}. From the estimate \eqref{Lrhoest} we know that $\mathcal{L}_\epsilon^{-1}$ is uniformly bounded in $\epsilon$ away from $0$. Hence from \eqref{Lepsilon} we deduce that
\begin{equation}\label{Lepsiloncont}
    \mathcal{L}_\epsilon^{-1}\to \mathcal{L}_\rho^{-1}\quad\text{as }\epsilon\to\rho.
\end{equation}
From \eqref{Lepsilon} and \eqref{Lepsiloncont} we then get
\begin{equation*}
    \frac{\mathcal{L}_\epsilon^{-1}-\mathcal{L}_\rho^{-1}}{\epsilon-\rho}\to -\mathcal{L}_\rho^{-1}L\mathcal{L}_\rho^{-1}\quad\text{as }\epsilon\to\rho,
\end{equation*}
which clearly achieves the proof.
\end{proof}
Following Lemma \ref{lem3} we can rewrite \eqref{reynoldsgen0} as 
\begin{equation}\label{reynoldsgen}
    \partial_x(A(\rho)\partial_x\rho)+\partial_y(A(\rho)\partial_y\rho)=0\qquad\text{in }\omega.
\end{equation}
The equation \eqref{reynoldsgen} is \emph{the generalized Reynolds equation}. From \eqref{bc2} we can write
\begin{equation}\label{reynoldsbc}
    \rho=\rho_0\quad\text{on }\partial\omega.
\end{equation}
We can now prove the main result of this section:
\begin{theorem}\label{thm2}
Recall that we have assumed that $\omega$ is of class $C^\infty$, and that $\rho_0\in C^\infty(\partial\omega)$ and $\rho_0>0$ on $\partial\omega$. Then the problem \eqref{reynoldsgen}--\eqref{reynoldsbc} has a unique solution $\rho\in C^\infty(\ol{\omega})$ satisfying $\rho>0$ on $\omega$.
\end{theorem}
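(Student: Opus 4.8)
The plan is to treat \eqref{reynoldsgen}--\eqref{reynoldsbc} as a quasilinear elliptic Dirichlet problem in divergence form, $\operatorname{div}(A(\rho)\nabla\rho)=0$ in $\omega$, $\rho=\rho_0$ on $\partial\omega$, and to exploit the structural facts about $A$ collected in Lemma \ref{lem3}: namely that $A\in C^\infty((0,\infty))$ and $A(\rho)>0$ for every $\rho>0$. The key point that makes the problem tractable is that the equation can be rewritten in terms of the \emph{Kirchhoff transform}. Define $\Phi(s)=\int_{1}^{s}A(\tau)\,d\tau$; since $A>0$ on $(0,\infty)$, $\Phi:(0,\infty)\to(\Phi(0^+),\Phi(\infty^-))$ is a smooth strictly increasing diffeomorphism onto its (open) image. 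If $\rho$ is a positive solution, then $u:=\Phi(\rho)$ satisfies $\nabla u=A(\rho)\nabla\rho$, hence $\Delta u=\operatorname{div}(A(\rho)\nabla\rho)=0$; conversely, any harmonic $u$ with values in the range of $\Phi$ yields $\rho=\Phi^{-1}(u)$ solving the original equation. So the first step is to linearize the problem via this transform.

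Second, I would solve the \emph{linear} problem for $u$: set $u_0:=\Phi(\rho_0)\in C^\infty(\partial\omega)$ (well-defined since $\rho_0>0$), and let $u$ be the unique solution of $\Delta u=0$ in $\omega$, $u=u_0$ on $\partial\omega$. Existence, uniqueness and $u\in C^\infty(\ol\omega)$ follow from standard elliptic theory for the Laplacian on the smooth bounded domain $\omega$ (Schauder estimates up to the boundary, or Perron plus boundary regularity). Third, I would invoke the maximum principle: the harmonic function $u$ satisfies $\min_{\partial\omega}u_0\le u\le\max_{\partial\omega}u_0$ on $\ol\omega$. Since $\rho_0>0$ and $\partial\omega$ is compact, $m:=\min_{\partial\omega}\rho_0>0$ and $M:=\max_{\partial\omega}\rho_0<\infty$, so $u$ takes values in $[\Phi(m),\Phi(M)]\subset\Phi((0,\infty))$, which is compactly contained in the range of $\Phi$. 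Therefore $\rho:=\Phi^{-1}(u)$ is well-defined, and by the smoothness of $\Phi^{-1}$ on the relevant compact interval together with $u\in C^\infty(\ol\omega)$ we get $\rho\in C^\infty(\ol\omega)$, with $m\le\rho\le M$, in particular $\rho>0$ on $\ol\omega$. Uniqueness transfers back: if $\rho_1,\rho_2$ are two positive smooth solutions, then $u_i=\Phi(\rho_i)$ are both harmonic with the same boundary data $u_0$, hence equal, hence $\rho_1=\rho_2$.

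The main obstacle — really the only non-bookkeeping point — is making sure the Kirchhoff substitution is legitimate globally, i.e. that one never leaves the domain $(0,\infty)$ on which $A$ is known to be smooth and positive. This is exactly why the a priori bounds $m\le\rho\le M$ are essential and must be obtained \emph{before} claiming $\rho$ solves anything: the maximum principle is applied to $u$, not to $\rho$ directly, so no circularity arises. One should also note that $\Phi$ need not be surjective onto all of $\mathbb R$ (if $\int^{\infty}A$ or $\int_{0^+}A$ converges), but this is harmless since the boundary data force $u$ into a compact subinterval of the image. A minor alternative, avoiding the transform, would be a direct fixed-point/continuity-method argument: freeze the coefficient $A(\tilde\rho)$, solve the linear elliptic problem, and iterate, using the comparison principle to keep iterates trapped in $[m,M]$ where $A$ is bounded above and below by positive constants; but the Kirchhoff route is cleaner and gives the regularity and uniqueness in one stroke.
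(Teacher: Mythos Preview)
Your proposal is correct and follows essentially the same route as the paper: the Kirchhoff transform $u=\Phi(\rho)$ reduces the quasilinear problem to the Dirichlet problem for the Laplacian, and the maximum principle on $u$ traps $\rho$ in $[m,M]$, yielding existence, positivity, smoothness and uniqueness. The paper's argument is identical up to notation (it calls the primitive $G$ and works on $(\rho_m,\infty)$ with $\rho_m<\min_{\partial\omega}\rho_0$ instead of your base point $1$).
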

\begin{proof}
Let $\rho_m>0$ be such that
\begin{equation*}
    \rho_m<\inf_{\partial\omega}\rho_0.
\end{equation*}
Define $G:(\rho_m,\infty)\to(0,\infty)$ by
\begin{equation*}
    G(\rho)=\int_{\rho_m}^\rho A(\rho)d\alpha.
\end{equation*}
Since $A$ is strictly positive and continuous we have that $G$ is strictly increasing and differentiable with $G'=A$.\\
Let us formally define 
\begin{equation*}
    \gamma(x,y)=\int_{\rho_m}^{\rho(x,y)} G(\alpha)d\alpha.
\end{equation*}
Then clearly
\begin{equation}\label{eqgamma}
    \nabla\gamma=A(\rho)\nabla\rho,
\end{equation}
and the system \eqref{reynoldsgen}--\eqref{reynoldsbc} can be written as
\begin{equation}\label{eqlaplace}
\left\{
    \begin{aligned}
    \Delta\gamma&=0 &\text{on }\omega,\\
    \gamma&=\int_{\rho_m}^{\rho_0(x,y)} G(\alpha)d\alpha &\text{on }\partial\omega.
    \end{aligned}
    \right.
\end{equation}
 The regularity assumptions, as well as Lemma \ref{lem3} imply that $$(x,y)\to\int_{\rho_m}^{\rho_0(x,y)} G(\alpha)d\alpha\in C^\infty(\partial\omega),$$ 
 and so standard elliptic theory ensures the existence of a solution $\gamma\in C^\infty(\ol{\omega})$ to problem \eqref{eqlaplace}. Moreover, the maximum principle gives us that 
 \begin{equation}\label{maxgamma}
\gamma(x,y)\leq\int_{\rho_m}^{{\rho_M}} G(\alpha)d\alpha\quad\forall\,(x,y)\in\ol{\omega},
 \end{equation}
 where $\rho_M=\sup_{\partial\omega}\rho_0$. Define $J:(\rho_m,\infty)\to(0,\infty)$
 \begin{equation*}
     J(\tau)=\int_{\rho_m}^{\tau} G(\alpha)d\alpha.
 \end{equation*}
 Since $G>0$ we have that $J$ is strictly increasing and from \eqref{maxgamma} we deduce that 
 $$ \gamma(x,y)\in R(J)\quad\forall\,(x,y)\in\ol{\omega}. $$
 Hence we can properly define 
 \begin{equation}\label{properrho}
     \rho(x,y)=J^{-1}\left( \gamma(x,y)\right)\quad\forall\,(x,y)\in\ol{\omega}.
 \end{equation}
 Since $J\in C^\infty$ and $J'=G>0$, it follows that $J^{-1}\in C^\infty$ which implies, through \eqref{properrho}, that $\rho\in C^\infty(\ol{\omega}).$ Evidently, \eqref{eqgamma} holds true, and since $\gamma$ solves the Laplace equation, it follows immediately that $\rho$ solves \eqref{reynoldsgen} in the classical sense. Note that, by definition $\rho$ satisfies \eqref{reynoldsbc} and $\rho\geq \rho_m$ in $\omega$. Lastly, observe that, by construction $\rho$ is unique.
\end{proof}

\section{Estimates on the remainder}

With $\rho$ (and hence $\mathcal{L}_\rho^{-1}$) properly determined we go back to \eqref{solg1} and take $a^1\equiv 0$ in order to fix $g^1$. We can now finally give the term $\ol{g^1}$ appearing in \eqref{bci2} as 
\begin{equation}\label{bcf1}
    \ol{g^1}=g^1_{\,{\big|{(\gamma_l)}_{-}}}\quad\text{which implies }f^1_{\,{\big|{(\gamma_l)}_{-}}}=M\ol{g^1}.
\end{equation}
Next, by taking $f^2=Mg^2$ in \eqref{ord2} we can rewrite it as:
\begin{equation*}
    \mathcal{L}_\rho g^2=2\Gamma(g^1,g^1)-v_x\partial_x g^1-v_y\partial_y g^1. 
\end{equation*}
From \eqref{bc1} it follows that $g^2$ has the diffusive reflection boundary conditions \eqref{bcg1}. From Theorem \ref{thm1} we get that
\begin{equation}\label{solg2}
     g^2=\mathcal{L}_\rho^{-1}\left(2\Gamma(g^1,g^1)-v_x\partial_x g^1-v_y\partial_y g^1\right)+a^2(x,y).
\end{equation}
Once again, we choose $a^2\equiv 0$ in order to fix $g^2$. From\eqref{solg1}, observe that $g^1$ depends on $x$ and $y$ only through $\rho$ (and $\mathcal{L}_\rho^{-1}$). Since $\rho\in C^\infty(\ol{\omega})$ by Theorem \ref{thm2} and the mapping $\rho\to\mathcal{L}_\rho^{-1}$ is $C^\infty$ by Lemma \ref{lem3} we find that
\begin{equation}\label{g1prop}
    (x,y)\to g^1(x,y,z,v)\in C^\infty(\ol{\omega}).
\end{equation}
From \eqref{solg2} and \eqref{g1prop} it follows that we have also
\begin{equation}\label{g2prop}
    (x,y)\to g^2(x,y,z,v)\in C^\infty(\ol{\omega}).
\end{equation}
Note that, by definition, $g^2\in L^2(0,H;L^2(Mdv))$, which, combined with \eqref{g2prop} yields 
\begin{equation}\label{eqt}
    -v_x\partial_x g^2-v_y\partial_y g^2:=t\in L^2(D;L^2(Mdv)).
\end{equation}
As it turns out, it is more convenient to return to the thin domain $D_\varepsilon$ -- we will consider the following truncated expansion of $f_\varepsilon$:
\begin{equation*}
    f_\varepsilon=\rho M+\varepsilon f_\varepsilon^1+\varepsilon^2 f_\varepsilon^2+\varepsilon^{3/2} R_\varepsilon.
\end{equation*}
By factoring $M$ we find:
\begin{equation}\label{exptrunc}
    M^{-1}f_\varepsilon=\rho M+\varepsilon g_\varepsilon^1+\varepsilon^2 g_\varepsilon^2+\varepsilon^{3/2} r_\varepsilon,
\end{equation}
with $r_\varepsilon=M^{-1}R_\varepsilon$. From \eqref{bltze}, as well as \eqref{solg1} and \eqref{solg2}, we have that $r_\varepsilon$ satisfies the following equation:
\begin{equation}\label{remainder}
    v\cdot\nabla_{\bold{x}}r_\varepsilon+\frac{\rho}{\varepsilon}Lr_\varepsilon=s_\varepsilon(r_\varepsilon)+\underbrace{\varepsilon^{1/2} t_\varepsilon+\varepsilon^{1/2}\Gamma\left(g_\varepsilon^1,g_\varepsilon^2\right)+2\varepsilon^{3/2}\Gamma\left(g_\varepsilon^2,g_\varepsilon^2\right)}_{:=w_\varepsilon},
\end{equation}
where 
\begin{equation}\label{eqs}
    s_\varepsilon(r_\varepsilon)=\Gamma\left(r_\varepsilon,g_\varepsilon^1\right)+\varepsilon\Gamma\left(r_\varepsilon,g_\varepsilon^2\right)+2\varepsilon^{1/2}\Gamma\left(r_\varepsilon,r_\varepsilon\right).
\end{equation}
Clearly, $r_\varepsilon$ solves the diffusive reflection boundary conditions on $\omega'$, namely
\begin{equation}\label{bcr1}
\left\{
\begin{aligned}
      r_\varepsilon(x',0,v)_{\big|v_z>0}&=\beta_{r_\varepsilon}(x',0),\\
      r_\varepsilon(x',\varepsilon H,v)_{\big|v_z<0}&=\beta_{r_\varepsilon}(x',\varepsilon H),
\end{aligned}
\right.
\end{equation}
for all $x'=(x,y)\in \omega.$ Moreover, from \eqref{bci2}, \eqref{bcf1}, as well as \eqref{remainder} we get that
\begin{equation}\label{bcr2}
    r_\varepsilon(\bold{x},v)_{\big|v\cdot n_{\bold{x}}<0}=-\varepsilon^{1/2}\ol{g^2_\varepsilon}(\bold{x},v),\quad \text{for }\bold{x}\in\gamma_l^\varepsilon,
\end{equation}
where $\ol{g^2_\varepsilon}$ is the restriction of $g^2_\varepsilon$ to $(\gamma_l^\varepsilon)_-$. We can now state the main result of this section.
\begin{theorem}\label{thm3}
Provided $\varepsilon>0$ is small enough, the problem \eqref{remainder} with boundary conditions \eqref{bcr1}--\eqref{bcr2} has a unique solution $r_\varepsilon\in L^2(D_\varepsilon;L^2(Mdv))$. In particular, this implies that the problem \eqref{bltze} with boundary conditions \eqref{bci1}--\eqref{bci2} has a unique solution $f_\varepsilon$ which satisfies
$$\|f_\varepsilon-\rho M\|\lesssim\varepsilon.$$
\end{theorem}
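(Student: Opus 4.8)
The plan is to treat \eqref{remainder} as a perturbation of a linear problem and solve it by a contraction argument, the linear theory being the analogue on the thin domain $D_\varepsilon$ of Steps 1--4 of Section 2. Observe first that the term $\Gamma(r_\varepsilon,g^1_\varepsilon)$ appearing in $s_\varepsilon$ (see \eqref{eqs}) carries no power of $\varepsilon$, so it must be kept on the left-hand side and regarded as part of the linearized collision operator; the remaining pieces of $s_\varepsilon(r_\varepsilon)$, which carry a factor $\varepsilon$ or $\varepsilon^{1/2}$, together with $w_\varepsilon$, form the genuine perturbation. Thus for $\bar r$ in a small ball of $L^2(D_\varepsilon;L^2(Mdv))$ I would solve
\[
v\cdot\nabla_{\bold x} r+\frac{\rho}{\varepsilon}Lr-\Gamma(r,g^1_\varepsilon)=\varepsilon\,\Gamma(\bar r,g^2_\varepsilon)+2\varepsilon^{1/2}\Gamma(\bar r,\bar r)+w_\varepsilon
\]
with the boundary conditions \eqref{bcr1}--\eqref{bcr2}. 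Well-posedness of this linear problem in $L^2(D_\varepsilon;L^2(Mdv))$ is obtained as in Section 2: the purely absorptive equation $v\cdot\nabla_{\bold x}r+\frac{\rho}{\varepsilon}\nu r=(\text{r.h.s.})$ is solved along characteristics — this is where the geometry of $D_\varepsilon$ and the bound \eqref{eqy0} on $y_0''$ enter, keeping the backward exit time under control uniformly — the diffuse reflection on $\omega'$ is handled by the $\vartheta$-penalised iteration, and the compact terms $\frac{\rho}{\varepsilon}K$ and $\Gamma(\cdot,g^1_\varepsilon)$ are incorporated via the Fredholm alternative, the triviality of the kernel following from the a priori estimate below.

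The heart of the matter is an a priori estimate for this linear operator \emph{with a constant independent of $\varepsilon$}. Green's identity gives
\[
A^2(r)+\tfrac12|r|^2_{\gamma^\varepsilon_l,+}+\frac{\rho}{\varepsilon}\int_{D_\varepsilon}(Lr,r)\,d\bold x=\int_{D_\varepsilon}\big(\text{r.h.s.}+\Gamma(r,g^1_\varepsilon),\,r\big)\,d\bold x+\frac{\varepsilon}{2}\,|\,\ol{g^2_\varepsilon}\,|^2_{\gamma^\varepsilon_l,-},
\]
where $A^2(r)\ge 0$ collects the diffuse contributions on $\omega'$. Using the spectral gap \eqref{coer} together with $P\Gamma(\cdot,\cdot)=0$ (so that $\Gamma(r,g^1_\varepsilon)$ pairs only with $r^\perp$) and the velocity-weighted boundedness of $g^1_\varepsilon$, the $\Gamma(r,g^1_\varepsilon)$ contribution is absorbed into $\frac{\rho}{\varepsilon}\|r^\perp\|_\nu^2$ up to a term $\eta\|Pr\|_2^2$, for $\varepsilon$ small. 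This controls $A(r)$, $|r|_{\gamma^\varepsilon_l,+}$ and $\frac1\varepsilon\|r^\perp\|_\nu^2$, but \emph{not} the hydrodynamic part $Pr=a+v\cdot b+\frac{|v|^2-3}{2}c$, which the spectral gap does not see; supplying this bound is the main obstacle.

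To control $\|Pr\|_2$ I would follow the $L^2$ method of \cite{esposito2013,esposito2018}: recover $a$, $b$, $c$ one component at a time from the weak formulation of the linear equation tested against functions of the form (a fixed polynomial in $v$) times $\psi(\bold x)$, where $\psi$ solves an auxiliary elliptic problem on $D_\varepsilon$ tailored to the component under consideration. Each identity produces a bound of the shape $\|\,\cdot\,\|_2\lesssim C(\varepsilon)\big(\|r^\perp\|_\nu+A(r)+\|\text{r.h.s.}\|+\text{data}\big)$, and the delicate point — precisely the one flagged in the introduction — is to track how the constants $C(\varepsilon)$, coming from Poincaré and elliptic estimates on the slab $D_\varepsilon=\omega\times[0,\varepsilon H]$ which is thin in one direction, depend on $\varepsilon$, and to check that the powers of $\varepsilon$ they carry are dominated by the $\varepsilon^{-1}$ gained in the coercive term. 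Feeding this back into the energy estimate, choosing $\eta$ and the absorption parameters small, the estimate closes to give $\|r\|\lesssim\|\text{r.h.s.}\|_*+\text{data}$ with a constant independent of $\varepsilon$, where $\|\cdot\|_*$ is the natural $\nu^{-1/2}$-weighted norm on the source.

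With this linear estimate the map $\Phi:\bar r\mapsto r$ satisfies, on a ball of radius $M_0$,
\[
\|\Phi(\bar r)\|\lesssim \varepsilon M_0+\varepsilon^{1/2}M_0^2+\|w_\varepsilon\|_*,\qquad \|\Phi(\bar r)-\Phi(\bar r')\|\lesssim(\varepsilon+\varepsilon^{1/2}M_0)\,\|\bar r-\bar r'\|,
\]
so, choosing $M_0$ of the order of $\|w_\varepsilon\|_*$ and $\varepsilon$ small, $\Phi$ maps the ball into itself and is a contraction; its unique fixed point is $r_\varepsilon$. The one point requiring extra care is the quadratic term $\Gamma(\bar r,\bar r)$, whose $L^2$ control is not automatic for $\bar r\in L^2(Mdv)$: one either complements the $L^2$ bound with a weighted-$L^\infty$ estimate obtained by the usual iteration along characteristics, or exploits the $\varepsilon^{1/2}$ prefactor together with the smallness of $|D_\varepsilon|\sim\varepsilon$. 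Uniqueness for \eqref{remainder} follows by applying the same a priori estimate to the difference of two solutions, whose equation is linear with coefficients made small by the powers of $\varepsilon$. Finally, since $\|w_\varepsilon\|_*$ and the boundary data $|\,\ol{g^2_\varepsilon}\,|_{\gamma^\varepsilon_l}$ are $O(\varepsilon)$ in the relevant norms (using $|D_\varepsilon|\sim\varepsilon$, $|\gamma^\varepsilon_l|\sim\varepsilon$ and the fixed smooth profiles $g^1,g^2$ from Section 2), the fixed point obeys $\|r_\varepsilon\|\lesssim\varepsilon$; translating back through \eqref{exptrunc} and using $\|f^1_\varepsilon\|,\|f^2_\varepsilon\|\lesssim\varepsilon^{1/2}$, the triangle inequality yields $\|f_\varepsilon-\rho M\|\lesssim\varepsilon$.
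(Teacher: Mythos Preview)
Your overall architecture matches the paper's: Steps 1--4 adapted to $D_\varepsilon$, Green's identity for $r_\varepsilon^\perp$, and the test-function method of \cite{esposito2013,esposito2018} for $Pr_\varepsilon$, with the $\varepsilon$-dependence of the elliptic constants tracked via a lemma on the thin slab (this is exactly the paper's Lemma~\ref{lem4}, where, incidentally, the bound \eqref{eqy0} on $y_0''$ is used --- in the elliptic regularity estimate, not in the characteristics as you suggest).

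The one substantive divergence is your treatment of $\Gamma(r_\varepsilon,g^1_\varepsilon)$. You assert that this term ``carries no power of $\varepsilon$, so it must be kept on the left-hand side''. The paper does \emph{not} do this: it keeps the full $s_\varepsilon(r_\varepsilon)$ on the right and iterates $r_\varepsilon^k\mapsto r_\varepsilon^{k+1}$ directly. The point you are missing is that on the thin domain the scaling $\|g^1_\varepsilon\|_\nu=\varepsilon^{1/2}\|g^1\|_\nu$ supplies a hidden factor of $\varepsilon^{1/2}$, so $\|\Gamma(r_\varepsilon^{k-1},g^1_\varepsilon)\|\lesssim\varepsilon^{1/2}\|r_\varepsilon^{k-1}\|_\nu$ is already perturbative. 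This makes the paper's route strictly simpler: no need to build $\Gamma(\cdot,g^1_\varepsilon)$ into the Fredholm step, no need for velocity-weighted $L^\infty$ bounds on $g^1_\varepsilon$ to absorb it into the coercive term. Your approach is not wrong, but the claimed necessity is.

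Two smaller points. First, the paper's linear estimate closes only to $\|r_\varepsilon\|_\nu\lesssim \|s_\varepsilon\|+1$, hence $\|r_\varepsilon\|_\nu\lesssim 1$ after the iteration --- not $\|r_\varepsilon\|\lesssim\varepsilon$ as you write; the conclusion $\|f_\varepsilon-\rho M\|\lesssim\varepsilon$ then comes from the prefactor $\varepsilon^{3/2}$ on $R_\varepsilon$ in \eqref{exptrunc} together with $\|f^i_\varepsilon\|=\varepsilon^{1/2}\|f^i\|$. Second, your caveat about controlling $\Gamma(\bar r,\bar r)$ purely in $L^2$ is well taken; the paper invokes the bilinear bound $\|\Gamma(h_1,h_2)\|\lesssim\|h_1\|_\nu\|h_2\|_\nu$ without further comment, so this is a concern shared by both arguments rather than a defect of yours.
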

\subsection{Study of the linear problem}
Let us begin by making a couple of observations. Firstly, for any $h\in L^2(D;L^2(Mdv))$, by a simple scaling $\|h_\varepsilon\|=\varepsilon^{1/2}\|h\|$. Moreover, it is a well known result (for instance from \cite{cercignani1994}) that 
\begin{equation*}
    \left| \Gamma(h_1,h_2) \right|\lesssim \|h_1\|_\nu \|h_2\|_\nu, \quad\forall\,h_1,h_2\in L^2(D;L^2(\nu^{1/2}Mdv)).
\end{equation*}
From \eqref{remainder} we can then write
\begin{equation}\label{estw}
    \|w_\varepsilon\|\lesssim \varepsilon, \quad\text{for }\varepsilon<1.
\end{equation}
Once again, from a scaling argument we get $|\ol{g^2_\varepsilon}|_{\gamma_l^\varepsilon,-}=\varepsilon^{1/2}|\ol{g^2}|_{\gamma_l,-}$, and so from \eqref{bcr2} we obtain
\begin{equation}\label{rgamma}
    \left| r_\varepsilon \right|_{\gamma_l^\varepsilon,-}\lesssim\varepsilon.
\end{equation}
Lastly, let us notice that $s_\varepsilon(r_\varepsilon)\in \left(\ker L\right)^\perp$.
\\
We proceed now to the study of the linear equation
\begin{equation}\label{remainderlin}
    v\cdot\nabla_{\bold{x}}r_\varepsilon+\frac{\rho}{\varepsilon}Lr_\varepsilon=s_\varepsilon+w_\varepsilon
\end{equation}
with fixed $s_\varepsilon\in L^2(D_\varepsilon;L^2(Mdv))\cap \left(\ker L\right)^\perp$, with boundary conditions \eqref{bcr1}--\eqref{bcr2}. We're going to proceed in four steps, just like we have done in the previous section. While Step 1, 2 and 4 are very similar to those for the one dimensional problem and we will skip them, the estimates in Step 3 will be more subtle to obtain. While one difficulty is clearly due to the higher dimension, another one is related to the fact that the size of the domain (and so various constants which are determined by it) depend on $\varepsilon$. \\
Applying Green's theorem in \eqref{remainderlin} and  using \eqref{step3-2} we find 
\begin{equation}\label{part3-1}
    \left|r_\varepsilon-\beta_{r_{\varepsilon}}\right|^2_{\omega',+}+\left|r_\varepsilon \right|^2_{\gamma_l^\varepsilon,+}+\frac{1}{\varepsilon}\int_{D_\varepsilon}\rho (Lr_\varepsilon,r_\varepsilon)d\bold{x}=\int_{D_\varepsilon}(s_\varepsilon+w_\varepsilon,r_\varepsilon)d\bold{x}+\left|r_\varepsilon \right|^2_{\gamma_l^\varepsilon,-}.
\end{equation}
Since $s_\varepsilon\in\left(\ker L\right)^\perp$, we have the estimate:
\begin{equation}\label{part3-2}
    \int_{D_\varepsilon}(s_\varepsilon+w_\varepsilon,r_\varepsilon)d\bold{x}\lesssim \|s_\varepsilon\| \|r_\varepsilon^\perp\|+\|w_\varepsilon\|\|r_\varepsilon\|.
\end{equation}
Let us call
\begin{equation}\label{part3-3}
    \frac{1}{\varepsilon}\left\|r_\varepsilon^\perp\right\| =A_\varepsilon,\quad \left\|P r_\varepsilon\right\| =B_\varepsilon,\quad \left|r_\varepsilon-\beta_{r_{\varepsilon}}\right|_{\omega',+}=C_\varepsilon,\quad\left|r_\varepsilon \right|_{\gamma_l^\varepsilon,+}=\Sigma_\varepsilon.
\end{equation}
By using \eqref{estw}, \eqref{part3-2}, \eqref{part3-3}, \eqref{rgamma} and \eqref{coer} in \eqref{part3-1} we find that
\begin{equation}\label{part3kinest}
    \frac{1}{\varepsilon}\left( C_\varepsilon^2+\Sigma_\varepsilon^2\right)+ A_\varepsilon^2\lesssim \|s_\varepsilon\| A_\varepsilon+ A_\varepsilon+B_\varepsilon+\varepsilon,
\end{equation}
where we have also tacitly used the fact that $\rho\geq \rho_m>0$ on $\ol{\omega}$. \\
The more difficult step is to obtain estimates on the fluid part of $r_\varepsilon$, that is
\begin{equation*}
    Pr_\varepsilon=a_\varepsilon+b_\varepsilon\cdot v+c_\varepsilon\frac{|v|^2-3}{2}.
\end{equation*}
A useful result in obtaining the estimates for $Pr_\varepsilon$ is the following Lemma, which shows how various constants (coming from Poincar\'e, trace or regularity inequalities) depend with respect to $\varepsilon$.
\begin{lemma}\label{lem4}
Let $\varphi_\varepsilon\in H^2(D_\varepsilon)$ satisfying one of the following two boundary conditions:
\begin{itemize}
    \item[(i)] $\varphi_\varepsilon=0$ on $\partial D_\varepsilon$.
    \vspace{-0.4cm}\\
    \item[(ii)] $\partial_z\varphi_\varepsilon=0$ on $\omega'$ and $\varphi_\varepsilon=0$ on $\gamma_l^\varepsilon$.
\end{itemize}
Then we have:
\begin{align}
    \|\varphi_\varepsilon\|_2+\|\nabla\varphi_\varepsilon\|_2 &\lesssim \|D^2\varphi_\varepsilon\|_2,\label{lem41}\\
    |\nabla\varphi_\varepsilon|_{L^2\left(\gamma_l^\varepsilon\right)}&\lesssim \|\varphi_\varepsilon\|_{H^2(D_\varepsilon)},\label{lem42}\\
    |\nabla\varphi_\varepsilon|_{L^2\left(\omega'\right)}&\lesssim \varepsilon^{-1/2}\|\varphi_\varepsilon\|_{H^2(D_\varepsilon)},\label{lem44}\\
    \|\varphi_\varepsilon\|_{H^2(D_\varepsilon)}&\lesssim \|\Delta \varphi_\varepsilon\|_2.\label{lem43}
\end{align}
\end{lemma}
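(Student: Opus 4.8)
The plan is to prove the four estimates in Lemma~\ref{lem4} by rescaling everything to the $\varepsilon$-independent domain $D = \omega \times [0,H]$ and tracking explicitly how each derivative in $z$ picks up a factor of $\varepsilon^{-1}$. Concretely, given $\varphi_\varepsilon$ on $D_\varepsilon = \omega \times [0,\varepsilon H]$, I would set $\tilde\varphi(x,y,z) = \varphi_\varepsilon(x,y,\varepsilon z)$ on $D$. Then $\partial_x, \partial_y$ are unchanged while $\partial_z \varphi_\varepsilon$ corresponds to $\varepsilon^{-1}\partial_z\tilde\varphi$, and similarly second derivatives $\partial_z^2$ scale like $\varepsilon^{-2}$ and mixed derivatives $\partial_x\partial_z$ like $\varepsilon^{-1}$. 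The volume element scales as $d\mathbf{x} = \varepsilon\, d\tilde{\mathbf{x}}$, the surface measure on $\omega'$ is unscaled (up to the constant factor $\varepsilon$ from $d\mathbf{x}$ versus the area of $\omega$ being fixed — in fact $dS$ on $\omega_b,\omega_t$ is unchanged), and the surface measure on $\gamma_l^\varepsilon$ scales as $dS = \varepsilon\, d\tilde S$. On the fixed domain $D$, all the Poincar\'e, elliptic-regularity and trace constants are genuine constants (the boundary conditions (i) and (ii) are preserved under the rescaling, and $D$ has a fixed smooth-enough boundary), so the whole game is bookkeeping of $\varepsilon$-powers.

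The order I would carry this out: First, establish \eqref{lem41}, the Poincar\'e-type inequality $\|\varphi_\varepsilon\|_2 + \|\nabla\varphi_\varepsilon\|_2 \lesssim \|D^2\varphi_\varepsilon\|_2$. On $D$, the corresponding inequality $\|\tilde\varphi\|_2 + \|\nabla\tilde\varphi\|_2 \lesssim \|D^2\tilde\varphi\|_2$ holds with an $\varepsilon$-independent constant because the homogeneous boundary condition (vanishing on $\partial D$, or on $\gamma_l$ with Neumann on $\omega'$) kills the kernel of $D^2$. Now I translate back: the left side $\|\varphi_\varepsilon\|_2 = \varepsilon^{1/2}\|\tilde\varphi\|_2$ and $\|\nabla\varphi_\varepsilon\|_2 \le \varepsilon^{1/2}(\|\nabla_{x,y}\tilde\varphi\|_2 + \varepsilon^{-1}\|\partial_z\tilde\varphi\|_2)$; the right side $\|D^2\varphi_\varepsilon\|_2 \gtrsim \varepsilon^{1/2}\varepsilon^{-2}\|\partial_z^2\tilde\varphi\|_2$, i.e. $\|D^2\varphi_\varepsilon\|_2$ controls $\varepsilon^{1/2}\varepsilon^{-2}$ times each second-derivative norm on $D$. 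The worst term on the left carries only $\varepsilon^{-1}$, the right carries $\varepsilon^{-2}$, so for $\varepsilon < 1$ the inequality goes the favorable direction and \eqref{lem41} follows with no negative power of $\varepsilon$. The same scaling bookkeeping, applied to the standard $H^2$ elliptic estimate $\|\tilde\varphi\|_{H^2(D)} \lesssim \|\Delta\tilde\varphi\|_2$ on the fixed domain, gives \eqref{lem43}: the left side is a sum of terms weighted by $\varepsilon^{1/2}\varepsilon^{-k}$ for $k=0,1,2$, dominated by $\varepsilon^{1/2}\varepsilon^{-2}$, and $\|\Delta\varphi_\varepsilon\|_2$ likewise controls $\varepsilon^{1/2}\varepsilon^{-2}\|\Delta\tilde\varphi\|_2$ plus lower-order — again the powers match up in the favorable direction.

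For the trace estimates \eqref{lem42} and \eqref{lem44} I would use the standard trace inequality on $D$, $|\nabla\tilde\varphi|_{L^2(\partial D)} \lesssim \|\tilde\varphi\|_{H^2(D)}$, and split $\partial D$ into the lateral part $\gamma_l$ and the horizontal parts $\omega_b \cup \omega_t = \omega'$. The key distinction is the behaviour of the normal derivative at each piece: on $\gamma_l$ the outward normal is horizontal, so $\nabla\tilde\varphi \cdot n$ involves only $\partial_x,\partial_y$ which do not scale, whereas on $\omega'$ the normal is the $z$-direction and $\partial_z\varphi_\varepsilon = \varepsilon^{-1}\partial_z\tilde\varphi$ brings in the bad factor. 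Combining with the surface-measure scaling ($dS_{\gamma_l^\varepsilon} = \varepsilon\, d\tilde S$, $dS_{\omega'}$ unscaled) and with \eqref{lem43} to convert $\|\tilde\varphi\|_{H^2(D)}$ into $\|\varphi_\varepsilon\|_{H^2(D_\varepsilon)}$, one reads off $|\nabla\varphi_\varepsilon|_{L^2(\gamma_l^\varepsilon)} \lesssim \|\varphi_\varepsilon\|_{H^2(D_\varepsilon)}$ and the extra $\varepsilon^{-1/2}$ loss in \eqref{lem44} coming precisely from the normal derivative on $\omega'$. The main obstacle — and the reason care is needed — is exactly this: one must keep separate track of tangential versus normal derivatives on each boundary piece, since they scale differently, and must verify that the $H^2$-norm on $D_\varepsilon$ (which is the mixed object $\|\varphi_\varepsilon\|_2 + \|\nabla\varphi_\varepsilon\|_2 + \|D^2\varphi_\varepsilon\|_2$ with its own $\varepsilon$-weighting per term) is correctly related to the clean $H^2(D)$ norm; getting the direction of every inequality right for $\varepsilon < 1$ is where a sign error in an exponent would be fatal.
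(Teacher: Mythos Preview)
Your scaling bookkeeping has a genuine error that breaks both \eqref{lem41} and, more seriously, \eqref{lem43}. For \eqref{lem41} you assert that ``$\|D^2\varphi_\varepsilon\|_2$ controls $\varepsilon^{1/2}\varepsilon^{-2}$ times \emph{each} second-derivative norm on $D$'', but this is false: $\|D^2\varphi_\varepsilon\|_2^2 = \varepsilon\|\partial_{xx}\tilde\varphi\|_2^2 + \varepsilon^{-1}\|\partial_{xz}\tilde\varphi\|_2^2 + \varepsilon^{-3}\|\partial_{zz}\tilde\varphi\|_2^2 + \cdots$, so $\|D^2\varphi_\varepsilon\|_2$ only controls $\varepsilon^{1/2}\|\partial_{xx}\tilde\varphi\|_2$, not $\varepsilon^{-3/2}\|\partial_{xx}\tilde\varphi\|_2$. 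Since the fixed-domain Poincar\'e inequality $\|\nabla\tilde\varphi\|_2\lesssim\|D^2\tilde\varphi\|_2$ mixes all second derivatives on the right, you cannot match the $\varepsilon^{-1/2}\|\partial_z\tilde\varphi\|_2$ term on the left against the right side with a uniform constant. The paper instead applies Poincar\'e \emph{only in the $x$- (or $y$-) direction}, which is legitimate because $\varphi_\varepsilon=0$ on $\gamma_l^\varepsilon$ and the horizontal size of $\omega$ is $O(1)$; this gives e.g.\ $\|\partial_z\varphi_\varepsilon\|_2\lesssim\|\partial_{zx}\varphi_\varepsilon\|_2$ directly on $D_\varepsilon$ with an $\varepsilon$-free constant.

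The failure for \eqref{lem43} is structural, not just bookkeeping. The Laplacian does \emph{not} commute with your anisotropic rescaling: $(\Delta\varphi_\varepsilon)(x,y,\varepsilon z)=\partial_{xx}\tilde\varphi+\partial_{yy}\tilde\varphi+\varepsilon^{-2}\partial_{zz}\tilde\varphi$, which is not $\Delta\tilde\varphi$. So the standard estimate $\|\tilde\varphi\|_{H^2(D)}\lesssim\|\Delta\tilde\varphi\|_2$ on the fixed domain is simply irrelevant; what you would actually need is uniform-in-$\varepsilon$ $H^2$ regularity for the degenerate operator $\varepsilon^2(\partial_{xx}+\partial_{yy})+\partial_{zz}$ on $D$, and that is precisely \eqref{lem43} after rescaling --- your argument is circular. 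The paper proves \eqref{lem43} by working directly on $D_\varepsilon$: it expands $\|\Delta\varphi_\varepsilon\|_2^2$, integrates the cross terms $\int\partial_{ii}\varphi_\varepsilon\,\partial_{jj}\varphi_\varepsilon$ by parts twice to produce $\|\partial_{ij}\varphi_\varepsilon\|_2^2$, and checks that the resulting boundary terms either vanish (on $\omega'$, using the boundary condition) or are controlled by $\|D^2\varphi_\varepsilon\|_2$ (on $\gamma_l^\varepsilon$, after differentiating the relation $\varphi_\varepsilon(x,y_0(x),z)=0$ twice; this is where the assumption \eqref{eqy0} on $y_0''$ enters). Your scaling approach is fine for \eqref{lem42} --- and the paper uses exactly that argument there --- but it cannot replace this direct computation for \eqref{lem43}.
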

\begin{proof}
We will prove the result for $\varphi_\varepsilon\in C^3(\ol{D_\varepsilon})$, and the result will follow by density.\\
Since $\varphi_\varepsilon=0$ on $\gamma_l^\varepsilon$ in both cases, by Poincar\'e's inequality we can write, for instance
\begin{equation}\label{proof41}
    \|\varphi_\varepsilon\|_2\lesssim \|\partial_x \varphi_\varepsilon\|_2,\quad  \|\varphi_\varepsilon\|_2\lesssim \|\partial_y \varphi_\varepsilon\|_2,
\end{equation}
since the size of $\omega$ is of order $1$. Similarly, since $\partial_z\varphi_\varepsilon=0$ on $\gamma_l^\varepsilon$, we obtain
\begin{equation}\label{proof42}
    \|\partial_z\varphi_\varepsilon\|_2\lesssim \|\partial_{zx} \varphi_\varepsilon\|_2.
\end{equation}
An easy integration by parts gives us:
\begin{equation*}
    \int_{D_\varepsilon}\varphi_\varepsilon\partial_{xx}\varphi_\varepsilon d\bold{x}=-\int_{D_\varepsilon}\left(\partial_{x}\varphi_\varepsilon\right)^2 d\bold{x}+\int_{\partial D_\varepsilon}\varphi_\varepsilon \partial_{x}\varphi_\varepsilon n_x dS.
\end{equation*}
The boundary term in the above relation vanishes since, on $\omega'$, $n_x=0$, while $\varphi_\varepsilon=0$ on $\gamma_l^\varepsilon$. Hence, with Cauchy-Schwarz inequality we obtain
\begin{equation*}
    \|\partial_x\varphi_\varepsilon\|_2^2\leq \| \varphi_\varepsilon\|_2\|\partial_{xx} \varphi_\varepsilon\|_2,
\end{equation*}
and so by using \eqref{proof41} we get:
\begin{equation}\label{proof43}
    \|\partial_x\varphi_\varepsilon\|_2\lesssim \|\partial_{xx} \varphi_\varepsilon\|_2.
\end{equation}
In a similar fashion we can prove that:
\begin{equation}\label{proof44}
    \|\partial_y\varphi_\varepsilon\|_2\lesssim \|\partial_{yy} \varphi_\varepsilon\|_2.
\end{equation}
From \eqref{proof41}--\eqref{proof44} the estimate \eqref{lem41} clearly follows.\\
Since $\partial_z\varphi_\varepsilon=0$ on $\gamma_l^\varepsilon$, we only need to prove \eqref{lem42} for $\partial_x\varphi_\varepsilon$ (the $y$ derivative is completely similar). We will do that by a simple scaling. The trace inequality in the fixed domain $D$ gives us:
\begin{equation*}
    |\partial_x\varphi|_{\gamma_l}\lesssim \|\partial_x\varphi\|_2+\|\partial_{xx}\varphi\|_2+\|\partial_{xz}\varphi\|_2+\|\partial_{zz}\varphi\|_2+\text{similar terms}.
\end{equation*}
Note that
\begin{align*}
    |\partial_x\varphi_\varepsilon|_{L^2(\gamma_l^\varepsilon)}=&\varepsilon^{1/2}|\partial_x\varphi|_{L^2(\gamma_l)},\quad \|\partial_x\varphi_\varepsilon\|_2=\varepsilon^{1/2}\|\partial_x\varphi\|_2,\quad \|\partial_{xx}\varphi_\varepsilon\|_2=\varepsilon^{1/2}\|\partial_{xx}\varphi\|_2,\\
    &\|\partial_{xz}\varphi_\varepsilon\|_2=\varepsilon^{-1/2}\|\partial_{xz}\varphi\|_2,\quad \|\partial_{zz}\varphi_\varepsilon\|_2=\varepsilon^{-3/2}\|\partial_{zz}\varphi\|_2.
\end{align*}
This clearly implies that, the optimal trace inequality constant actually decreases with $\varepsilon$, justifying \eqref{lem42}. The proof of \eqref{lem44} is done exactly in the same way, the worse constant on the right hand side coming from the fact 
$$ |\partial_x\varphi_\varepsilon|_{L^2(\omega')}=|\partial_x\varphi|_{L^2(\omega')},$$
while the other scaling constants remain the same.
\\
The trickiest thing to prove is \eqref{lem43}. First, notice that, by \eqref{lem41} it is enough to obtain the bound on $D^2\varphi_\varepsilon$. We can write
    \begin{align}
          \|\Delta\varphi_\varepsilon\|^2&=\int_{D_\varepsilon}\left(\partial_{xx}\varphi_\varepsilon+\partial_{yy}\varphi_\varepsilon+\partial_{zz}\varphi_\varepsilon\right)^2d\bold{x}=\sum_x \int_{D_\varepsilon}\left(\partial_{xx}\varphi_\varepsilon\right)^2d\bold{x}\notag\\
          &+2\int_{D_\varepsilon}\left(\partial_{xx}\varphi_\varepsilon\partial_{yy}\varphi_\varepsilon+\partial_{xx}\varphi_\varepsilon\partial_{zz}\varphi_\varepsilon+\partial_{yy}\varphi_\varepsilon\partial_{zz}\varphi_\varepsilon\right)d\bold{x}.\label{proof45}
    \end{align}
    Integrating twice by parts gives us:
    \begin{equation*}
        \int_{D_\varepsilon}\partial_{xx}\varphi_\varepsilon\partial_{zz}\varphi_\varepsilon d\bold{x}=\int_{D_\varepsilon}\left(\partial_{xz}\varphi_\varepsilon \right)^2 d\bold{x}+\underbrace{\int_{\partial D_\varepsilon}\partial_{x}\varphi_\varepsilon\partial_{zz}\varphi_\varepsilon n_x dS}_{T_1}-\underbrace{\int_{\partial D_\varepsilon}\partial_{x}\varphi_\varepsilon\partial_{xz}\varphi_\varepsilon n_z dS}_{T_2}.
    \end{equation*}
    Clearly $T_1=0$ on $\omega'$ since $n_x=0$ and $T_2=0$ on $\gamma_l^\varepsilon$, since $n_z=0$. Moreover, as we have argued before $\partial_{zz}\varphi_\varepsilon=0$ on $\gamma_l^\varepsilon$. Hence
    \begin{equation*}
        T_1-T_2=\int_{\omega'}\partial_{x}\varphi_\varepsilon\partial_{xz}\varphi_\varepsilon n_z dS.
    \end{equation*}
    If $\varphi_\varepsilon=0$ on $\omega'$ then clearly $\partial_{x}\varphi_\varepsilon=0$. If, on the other hand,  $\partial_z \varphi_\varepsilon=0$ on $\omega'$ then $\partial_{zx}\varphi_\varepsilon=0$. Either way $T_1=T_2=0$, showing
    \begin{equation}\label{proof46}
        \int_{D_\varepsilon}\partial_{xx}\varphi_\varepsilon\partial_{zz}\varphi_\varepsilon d\bold{x}=\|\partial_{xz}\varphi_\varepsilon\|^2_2.
    \end{equation}
    In an identical manner we can prove:
    \begin{equation}\label{proof47}
        \int_{D_\varepsilon}\partial_{yy}\varphi_\varepsilon\partial_{zz}\varphi_\varepsilon d\bold{x}=\|\partial_{yz}\varphi_\varepsilon\|^2_2.
    \end{equation}
Lastly, we have
\begin{align}
        2\int_{D_\varepsilon}\partial_{xx}\varphi_\varepsilon\partial_{yy}\varphi_\varepsilon d\bold{x}=&2\int_{D_\varepsilon}\left(\partial_{xy}\varphi_\varepsilon \right)^2 d\bold{x}+\int_{\gamma_l^\varepsilon}\underbrace{\left(\partial_{x}\varphi_\varepsilon\partial_{yy}\varphi_\varepsilon n_x-\partial_{x}\varphi_\varepsilon\partial_{xy}\varphi_\varepsilon n_y\right)}_{T_3} dS\notag\\
        +&\int_{\gamma_l^\varepsilon}\underbrace{\left(\partial_{y}\varphi_\varepsilon\partial_{xx}\varphi_\varepsilon n_y-\partial_{y}\varphi_\varepsilon\partial_{xy}\varphi_\varepsilon n_x\right)}_{T_4} dS.\label{proof48}
    \end{align}
Recalling the definition of $\partial\omega$ we get that the outward normal to $\gamma_l^\varepsilon$ is given by 
\begin{equation}\label{proof411}
    n=\bigg(\underbrace{\frac{y_0'}{\sqrt{(y_0')^2+1}}}_{n_x},\underbrace{-\frac{1}{\sqrt{(y_0')^2+1}}}_{n_y},0\bigg).
\end{equation}
The boundary condition of $\varphi_\varepsilon$ on $\gamma_l^\varepsilon$ implies that
\begin{align}
    0&=\frac{d}{dx} \varphi_\varepsilon(x,y_0(x),z)=\partial_x\varphi_\varepsilon+\partial_y\varphi_\varepsilon y_0',\label{proof49}\\
    0&=\frac{d^2}{dx^2} \varphi_\varepsilon(x,y_0(x),z)=\partial_{xx}\varphi_\varepsilon+\partial_{xy}\varphi_\varepsilon y_0'+y_0'\left( \partial_{xy}\varphi_\varepsilon +\partial_{yy}\varphi_\varepsilon y_0'\right)+\partial_y\varphi_\varepsilon y_0''.\label{proof410}
\end{align}
By multiplying in \eqref{proof410} with $\partial_y\varphi_\varepsilon$ we can write, using \eqref{proof49} that
\begin{equation*}
    0=-T_4\sqrt{(y_0')^2+1}-T_3\sqrt{(y_0')^2+1}+y_0''\left(\partial_y\varphi_\varepsilon\right)^2.
\end{equation*}
Going back to \eqref{proof48} we find that
\begin{align}
    \int_{D_\varepsilon}\partial_{xx}\varphi_\varepsilon\partial_{yy}\varphi_\varepsilon d\bold{x}&=
   \|\partial_{xy}\varphi_\varepsilon \|^2+\frac{1}{2}\int_{\gamma_l^\varepsilon}\frac{y_0''}{\sqrt{(y_0')^2+1}}(\partial_{y}\varphi_\varepsilon)^2 dS\notag\\
   &\leq \|\partial_{xy}\varphi_\varepsilon \|^2+\frac{1}{2}\sup |y_0''| \left|\partial_{y}\varphi_\varepsilon\right|_{\gamma_l^\varepsilon}^2\lesssim \|D^2 \varphi_\varepsilon\|_2,\label{proof412} 
\end{align}
  by using \eqref{lem42} and \eqref{lem41}.\\
The desired result follows from \eqref{proof412}, \eqref{proof47}, \eqref{proof46} and \eqref{proof45}.\\
\end{proof}
We are now ready to pass to the derivation of the estimates on the fluid part of $r_\varepsilon$. Start with the following weak formulation of problem \eqref{remainderlin}:
\begin{align}\label{weakkin}
        \int_{D_\varepsilon\times\mathbb{R}^3}r_\varepsilon v\cdot\nabla_{\bold{x}}\psi_\varepsilon +\int_{\omega'\times\mathbb{R}^3}\beta_{r_{\varepsilon}}\psi_\varepsilon v\cdot n_{\bold{x}}=\int_{\omega'\times\mathbb{R}^3}\left(r_\varepsilon-\beta_{r_{\varepsilon}}\right)\psi_\varepsilon v\cdot n_{\bold{x}}\notag\\
        +\frac{1}{\varepsilon}\int_{D_\varepsilon\times\mathbb{R}^3} \rho Lr_\varepsilon^\perp\psi_\varepsilon
    -\int_{D_\varepsilon\times\mathbb{R}^3}(s_\varepsilon+w_\varepsilon)\psi_\varepsilon+\int_{\gamma_l^\varepsilon\times\mathbb{R}^3} r_\varepsilon\psi_\varepsilon v\cdot n_{\bold{x}},
\end{align}
where the tacit measures are $Md\bold{x}dv$ for the bulk terms and $MdS_{\bold{x}}dv$ for the boundary ones.
In order  the key idea is -- as first shown in \cite{esposito2013} -- to pick appropriate test functions $\psi_\varepsilon$ in \eqref{weakkin}.\\
\vspace{.1cm}\\
\underline{Estimate on $c_\varepsilon$:} Choose first the test function
\begin{equation*}
    \psi=\psi_{c_{\varepsilon}}=(|v|^2-5)v\cdot\nabla_{\bold{x}}\phi_{c_\varepsilon},
\end{equation*}
where $\phi_{c_\varepsilon}=\phi_{c_\varepsilon}(\bold{x})$ solves the elliptic problem
\begin{equation}\label{cepsilon}
\left\{
    \begin{aligned}
    -\Delta\phi_{c_\varepsilon}&=c_\varepsilon &\text{on }D_\varepsilon,\\
    \phi_{c_\varepsilon}&=0 &\text{on }\partial D_\varepsilon.
    \end{aligned}
    \right.
\end{equation}
The right hand side of \eqref{weakkin}, which we will call $(*)_{c_\varepsilon}$ is controlled as follows
\begin{equation*}
\begin{split}
      \left|(*)_{c_\varepsilon}\right|\lesssim \left|r_\varepsilon-\beta_{r_\varepsilon}\right|_{\omega',+}\left|\nabla\phi_{c_\varepsilon}\right|_{L^2(\omega')}+\frac{1}{\varepsilon}\|r_\varepsilon^\perp\| \|\nabla\phi_{c_\varepsilon}\|_2+\left(\|s_\varepsilon\|+\|w_\varepsilon\|\right)\|\nabla\phi_{c_\varepsilon}\|_2\\
      +\left(|r_\varepsilon|_{\gamma_l^\varepsilon,-}
      +|r_\varepsilon|_{\gamma_l^\varepsilon,+}\right)\left|\nabla\phi_{c_\varepsilon}\right|_{L^2(\gamma_l^\varepsilon)}.
\end{split}
\end{equation*}
Taking \eqref{cepsilon} into account, using the results of Lemma \ref{lem4} and recalling \eqref{estw}, \eqref{rgamma} and the notations \eqref{part3-3} we find
\begin{equation}\label{ceps0}
    \left|(*)_{c_\varepsilon}\right|\lesssim \left(\varepsilon^{-1/2}C_\varepsilon+A_\varepsilon+\|s_\varepsilon\|+\varepsilon+\Sigma_\varepsilon\right)\|c_\varepsilon\|_2.
\end{equation}
Following \cite{esposito2013} we can show that
\begin{align}
    \int_{\omega'\times\mathbb{R}^3}\beta_{r_{\varepsilon}}\psi_{c_\varepsilon} v\cdot n_{\bold{x}}=0,\label{ceps1}\\
    \int_{D_\varepsilon\times\mathbb{R}^3}r_\varepsilon v\cdot\nabla_{\bold{x}}\psi_{c_\varepsilon}=-10\pi\|c_\varepsilon\|^2_2+\int_{D_\varepsilon\times\mathbb{R}^3}r_\varepsilon^\perp (|v|^2-5) v_i v_j\partial_{ij}\phi_{c_\varepsilon},\label{ceps2}
\end{align}
using Einstein's summation convention for $i,j\in\{1,2,3\}$ and $\partial_1=\partial_x,\,\partial_2=\partial_y,\,\partial_3=\partial_z$. Using Lemma \ref{lem4} and \eqref{cepsilon} we readily find
\begin{equation}\label{ceps3}
    \left|\int_{D_\varepsilon\times\mathbb{R}^3}r_\varepsilon^\perp (|v|^2-5) v_i v_j\partial_{ij}\phi_{c_\varepsilon}\right|\lesssim \|r_\varepsilon^\perp\|\|c_\varepsilon\|_2.
\end{equation}
From \eqref{weakkin}, \eqref{ceps0}, \eqref{ceps1}, \eqref{ceps2} and \eqref{ceps3} we obtain
\begin{equation}\label{estcepsilon}
    \|c_\varepsilon\|_2\lesssim\varepsilon^{-1/2}C_\varepsilon+A_\varepsilon+\|s_\varepsilon\|+\varepsilon+\Sigma_\varepsilon.
\end{equation}
\vspace{.1cm}\\
\underline{Estimate on $b_\varepsilon$:} Choose the test functions in \eqref{weakkin}
\begin{equation*}
    \psi_\varepsilon=\psi^{i,j}_{b_{\varepsilon}}=(v_i^2-1)\partial_j\phi^j_{b_\varepsilon}, \quad\text{for }i,j\in\{1,2,3\},
\end{equation*}
where $\phi^j_{b_\varepsilon}=\phi^j_{b_\varepsilon}(\bold{x})$ solve the elliptic problems
\begin{equation}\label{bepsilon1}
\left\{
    \begin{aligned}
    -\Delta\phi^j_{b_\varepsilon}&=b^j_\varepsilon &\text{on }D_\varepsilon,\\
    \phi^j_{b_\varepsilon}&=0 &\text{on }\partial D_\varepsilon.
    \end{aligned}
    \right.
\end{equation}
Similarly to \eqref{ceps0} we have
\begin{equation*}
    \left|(*)_{b_\varepsilon}^{i,j}\right|\lesssim \left(\varepsilon^{-1/2}C_\varepsilon+A_\varepsilon+\|s_\varepsilon\|+\varepsilon+\Sigma_\varepsilon\right)\|b^j_\varepsilon\|_2,\quad\text{for }i,j\in\{1,2,3\}.
\end{equation*}
Once again, as in \cite{esposito2013}, we find
\begin{align*}
    \int_{\omega'\times\mathbb{R}^3}\beta_{r_{\varepsilon}}\psi^{i,j}_{b_\varepsilon} v\cdot n_{\bold{x}}=0,\\
    \int_{D_\varepsilon\times\mathbb{R}^3}r_\varepsilon v\cdot\nabla_{\bold{x}}\psi^{i,j}_{b_\varepsilon}=2\int_{D_\varepsilon}b^i_\varepsilon\partial_{ij}\phi^j_{b_\varepsilon}+\int_{D_\varepsilon\times\mathbb{R}^3}r_\varepsilon^\perp (v_i^2-1) v_j\partial_{ij}\phi^j_{b_\varepsilon},
\end{align*}
for all $i,j\in\{1,2,3\}.$ We then derive the following estimate
\begin{equation}\label{estbepsilon1}
    \left|\int_{D_\varepsilon}b^i_\varepsilon\partial_{ij}\phi^j_{b_\varepsilon}\right|\lesssim \left(\varepsilon^{-1/2}C_\varepsilon+A_\varepsilon+\|s_\varepsilon\|+\varepsilon+\Sigma_\varepsilon\right)\|b^j_\varepsilon\|_2,
\end{equation}
for $i,j\in\{1,2,3\}$.\\
Next, take as test functions in \eqref{weakkin}
\begin{equation*}
    \psi_\varepsilon=\ol{\psi}^{i,j}_{b_{\varepsilon}}=|v|^2v_iv_j\partial_j\phi^i_{b_\varepsilon}, \quad\text{for }i\neq j,
\end{equation*}
where $b^i_\varepsilon$ is defined in \eqref{bepsilon1}. The right hand side of \eqref{weakkin} is bounded as follows:
\begin{equation*}
    \left|\ol{(*)}_{b_\varepsilon}^{i,j}\right|\lesssim \left(\varepsilon^{-1/2}C_\varepsilon+A_\varepsilon+\|s_\varepsilon\|+\varepsilon+\Sigma_\varepsilon\right)\|b^i_\varepsilon\|_2,\quad\text{for }i\neq j.
\end{equation*}
As for the terms on the left hand side we have (\cite{esposito2013}):
\begin{align*}
    \int_{\omega'\times\mathbb{R}^3}\beta_{r_{\varepsilon}}\ol{\psi}^{i,j}_{b_\varepsilon} v\cdot n_{\bold{x}}&=0,\\
    \int_{D_\varepsilon\times\mathbb{R}^3}r_\varepsilon v\cdot\nabla_{\bold{x}}\ol{\psi}^{i,j}_{b_\varepsilon}&=7\left(\int_{D_\varepsilon}b^j_\varepsilon\partial_{ij}\phi^i_{b_\varepsilon}+\int_{D_\varepsilon} b^i_\varepsilon\partial_{jj}\phi^i_{b_\varepsilon}\right)\\
    &+\int_{D_\varepsilon\times\mathbb{R}^3}r_\varepsilon^\perp |v|^2 v_i v_j v_k\partial_{jk}\phi^j_{b_\varepsilon},
\end{align*}
for $i\neq j$. We are led to the following estimate
\begin{equation}\label{estbepsilon2}
    \left|\int_{D_\varepsilon} b^i_\varepsilon\partial_{jj}\phi^i_{b_\varepsilon}\right|\lesssim \left|\int_{D_\varepsilon} b^j_\varepsilon\partial_{ij}\phi^i_{b_\varepsilon}\right|+\left(\varepsilon^{-1/2}C_\varepsilon+A_\varepsilon+\|s_\varepsilon\|+\varepsilon+\Sigma_\varepsilon\right)\|b^i_\varepsilon\|_2
\end{equation}
for $i\neq j$. By combining \eqref{estbepsilon1} and \eqref{estbepsilon2} we obtain
\begin{equation}\label{estbepsilon3}
    \left|\int_{D_\varepsilon} b^i_\varepsilon\partial_{jj}\phi^i_{b_\varepsilon}\right|\lesssim\left(\varepsilon^{-1/2}C_\varepsilon+A_\varepsilon+\|s_\varepsilon\|+\varepsilon+\Sigma_\varepsilon\right)\|b_\varepsilon\|_2
\end{equation}
for $i\neq j$.
By summing in \eqref{estbepsilon3} for $j\neq i$ and using \eqref{estbepsilon1} with $j=i$ we find that
\begin{equation*}
    \left|\int_{D_\varepsilon} b^i_\varepsilon\Delta\phi^i_{b_\varepsilon}\right|\lesssim \left(\varepsilon^{-1/2}C_\varepsilon+A_\varepsilon+\|s_\varepsilon\|+\varepsilon+\Sigma_\varepsilon\right)\|b_\varepsilon\|_2
\end{equation*}
for all $i\in\{1,2,3\}$ and so by using \eqref{bepsilon1} we immediately obtain
\begin{equation}\label{estbepsilon}
   \|b_\varepsilon\|_2\lesssim \varepsilon^{-1/2}C_\varepsilon+A_\varepsilon+\|s_\varepsilon\|+\varepsilon+\Sigma_\varepsilon.
\end{equation}
\vspace{.1cm}\\
\underline{Estimate on $a_\varepsilon$:} Take the test function in \eqref{weakkin}
\begin{equation*}
    \psi=\psi_{a_{\varepsilon}}=(|v|^2-10)v\cdot\nabla_{\bold{x}}\phi_{a_\varepsilon},
\end{equation*}
where $\phi_{a_\varepsilon}=\phi_{a_\varepsilon}(\bold{x})$ solves the elliptic problem with mixed boundary conditions
\begin{equation}\label{cepsilon}
\left\{
    \begin{aligned}
    -\Delta\phi_{a_\varepsilon}&=a_\varepsilon &\text{on }D_\varepsilon,\\
    \partial_z\phi_{a_\varepsilon}&=0 &\text{on } \omega',\\
        \phi_{a_\varepsilon}&=0 &\text{on } \gamma_l^\varepsilon.
    \end{aligned}
    \right.
\end{equation}
Once again, the right hand side of \eqref{weakkin}
\begin{equation*}
    \left|(*)_{a_\varepsilon}\right|\lesssim \left(\varepsilon^{-1/2}C_\varepsilon+A_\varepsilon+\|s_\varepsilon\|+\varepsilon+\Sigma_\varepsilon\right)\|a_\varepsilon\|_2.
\end{equation*}
Following \cite{esposito2013} we can write
\begin{align*}
    \int_{\omega'\times\mathbb{R}^3}\beta_{r_{\varepsilon}}\psi_{a_\varepsilon} v\cdot n_{\bold{x}}=0,\\
    \int_{D_\varepsilon\times\mathbb{R}^3}r_\varepsilon v\cdot\nabla_{\bold{x}}\psi_{a_\varepsilon}=\frac{5}{2}\|a_\varepsilon\|^2_2+\int_{D_\varepsilon\times\mathbb{R}^3}r_\varepsilon^\perp (|v|^2-10) v_i v_j\partial_{ij}\phi_{a_\varepsilon}.
\end{align*}
Like with the estimates for $c_\varepsilon$  we obtain
\begin{equation}\label{estaepsilon}
    \|a_\varepsilon\|_2\lesssim\varepsilon^{-1/2}C_\varepsilon+A_\varepsilon+\|s_\varepsilon\|+\varepsilon+\Sigma_\varepsilon.
\end{equation}
We can now combine \eqref{estcepsilon}, \eqref{estbepsilon} and \eqref{estaepsilon} to obtain
\begin{equation*}
    B_\varepsilon^2=\|Pr_\varepsilon\|_\nu^2\lesssim \left(\varepsilon^{-1/2}C_\varepsilon+A_\varepsilon+\|s_\varepsilon\|+\varepsilon+\Sigma_\varepsilon\right)B_\varepsilon,
\end{equation*}
which implies that
\begin{equation}\label{part3fluidest}
    B_\varepsilon^2\lesssim \varepsilon^{-1}C_\varepsilon^2+A_\varepsilon^2+\Sigma_\varepsilon^2+(\|s_\varepsilon\|+\varepsilon) B_\varepsilon.
\end{equation}
By combining \eqref{part3kinest} and \eqref{part3fluidest} we obtain, after a couple of elementary algebraic manipulations that
\begin{equation}\label{lastest}
    A_\varepsilon+B_\varepsilon\lesssim \|s_\varepsilon\|+1.
\end{equation}

\subsection{The nonlinear problem}
 Define the sequence $r^k_\varepsilon$ by 
    \begin{equation*}
        v\cdot\nabla_\bold{x} r^k_\varepsilon+\frac{\rho}{\varepsilon}Lr^k_\varepsilon=s_\varepsilon(r^{k-1}_\varepsilon)+w_\varepsilon,
    \end{equation*}
    with boundary conditions \eqref{bcr1}--\eqref{bcr2} for $k\geq 1$ and $r^0_\varepsilon=0.$ The linear theory in the previous section ensures that $r^k_\varepsilon$ is well defined. The estimate \eqref{lastest} gives us
    \begin{equation}
        \|r^k_\varepsilon\|_\nu\lesssim 1+\|s_\varepsilon(r^{k-1}_\varepsilon)\|.
    \end{equation}
    Since 
    \begin{align*}
        \left\|\Gamma\left( g^i_\varepsilon,r^{k-1}_\varepsilon\right)\right\|\lesssim \|g^i_\varepsilon\|_\nu \|r^{k-1}_\varepsilon\|_\nu&=\varepsilon^{1/2}\|g^i\|_\nu \|r^{k-1}_\varepsilon\|_\nu,\quad\text{for }i\in\{1,2\},\\
        \left\|\Gamma\left( r^{k-1}_\varepsilon,r^{k-1}_\varepsilon\right)\right\|&\lesssim \|r^{k-1}_\varepsilon\|_\nu^2
    \end{align*}
    and so from \eqref{eqs} we find that
    \begin{equation*}
         \|r^k_\varepsilon\|_\nu\lesssim 1+\varepsilon^{1/2} \|r^{k-1}_\varepsilon\|_\nu+\varepsilon^{1/2}\|r^{k-1}_\varepsilon\|_\nu^2.
    \end{equation*}
    The above relation clearly implies that, for $\varepsilon$ small enough
    \begin{equation}\label{estrk}
        \|r^k_\varepsilon\|_\nu\lesssim 1\quad\text{uniformly in } k. 
    \end{equation}
    Set 
$$ q^k_\varepsilon= r^{k}_\varepsilon-r^{k-1}_\varepsilon\quad\text{for } k\geq 2.$$
Then $q^k_\varepsilon$ solves the following
$$ v\cdot\nabla_\bold{x} q^k_\varepsilon+\frac{\rho}{\varepsilon}Lq^k_\varepsilon= j_\varepsilon(q^{k-1}_\varepsilon),$$
for $k\geq 3$ where 
$$j_\varepsilon(q^{k-1}_\varepsilon)=\Gamma\left( g^1_\varepsilon,q^{k-1}_\varepsilon\right)+\varepsilon\Gamma\left( g^2_\varepsilon,q^{k-1}_\varepsilon\right)+2\varepsilon^{1/2}\Gamma\left( r^{k-1}_\varepsilon+r^{k-2}_\varepsilon,q^{k-1}_\varepsilon\right).$$
The boundary conditions for $q^k_\varepsilon$ are obviously \eqref{bcr1} on $\omega'$, while 
\begin{equation}\label{bcq}
    q^k_\varepsilon(\bold{x},v)_{\big|v\cdot n_{\bold{x}}<0}=0,\quad \text{for }\bold{x}\in\gamma_l^\varepsilon.
\end{equation}
Using this boundary condition we can improve the estimates for the linear problem to find
 \begin{equation*}
        \|q^k_\varepsilon\|_\nu\lesssim \|j_\varepsilon(q^{k-1}_\varepsilon)\|, 
    \end{equation*}
    and using \eqref{estrk} we find that 
    \begin{equation*}
        \|q^k_\varepsilon\|_\nu\lesssim \varepsilon^{1/2}\|q^{k-1}_\varepsilon\|,
    \end{equation*}
    which shows that, for $\varepsilon$ small enough, there exists $\zeta\in (0,1)$ with
    \begin{equation}
        \|q^k_\varepsilon\|_\nu \leq \zeta\quad\text{uniformly in } k.
    \end{equation}
    This implies that $r_\varepsilon^k$ is strongly convergent in $L^2(D_\varepsilon;L^2(\nu^{1/2} Mdv))$ to $r_\varepsilon$, which is clearly the solution to the original problem \eqref{remainder}. This clearly achieves the proof of Theorem \ref{thm3}.
\bibliographystyle{plain}
\bibliography{Boltzmann_to_Reynolds}
\end{document}